\theoremstyle{plain}
\newtheorem{theorem}{Theorem}[section]
\newtheorem{lemma}[theorem]{Lemma}
\newtheorem{corollary}[theorem]{Corollary}
\theoremstyle{remark}
\newtheorem{remark}[theorem]{Remark}
\begin{document}
\def\O{\Omega}
\def\p{\partial}
\def\cN{\mathcal{N}}
\def\tQ{\tilde{Q}}
\def\tu{{\tilde u}}
\def\Pij{\Pi_j}
\def\bQ{\mathbb{Q}}
\def\R{\mathbb{R}}
\def\d{\displaystyle}
\def\cV{\mathcal{V}}
\def\dotleq{\hspace{4pt}\raise 1pt\hbox{$\leq$}\hspace{-6pt}\lower 4pt\hbox{.}\hspace{6pt}}
\def\tK{{\tilde K}}
\def\Date{December 12, 2012}
\allowdisplaybreaks[4]
\numberwithin{equation}{section} \numberwithin{figure}{section}
\numberwithin{table}{section}
\title[A Generalized Finite Element Method for the Obstacle Problem of Plates]
 {A Generalized Finite element method\\
 for the displacement obstacle problem of\\ Clamped Kirchhoff plates}
\thanks{The work of the first and third authors was supported in part
by the National Science Foundation under Grant No.
 DMS-10-16332.  The work of the second author was supported in part by
 the National Science Foundation through
 the VIGRE Grant 07-39382.}
\author{Susanne C. Brenner}
\address{Department of Mathematics and Center for Computation and Technology,
 Louisiana State University, Baton Rouge, LA 70803}
\email{brenner@math.lsu.edu}
\author{Christopher B. Davis}
\address{Department of Mathematics and Center for Computation and Technology,
 Louisiana State University, Baton Rouge, LA 70803}
 \email{cdav135@lsu.edu}
\author{Li-yeng Sung}
\address{Department of Mathematics and Center for Computation and Technology,
 Louisiana State University, Baton Rouge, LA 70803}
\email{sung@math.lsu.edu}
\begin{abstract} A generalized finite element method for the displacement obstacle
 problem of clamped Kirchhoff plates is considered in this paper.  We derive optimal error estimates and present
 numerical results that illustrate the performance of the method.
\end{abstract}
\subjclass{65K15, 65N30}
\keywords{generalized finite element, displacement obstacle, clamped
Kirchhoff plate, fourth order,
 variational inequality}
%
%
\maketitle
%
\section{Introduction}\label{sec:Introduction}
 Let $\O$ be a bounded polygonal domain $\Omega\subset\mathbb{R}^2$, $f\in L_2(\O)$,
 $g\in H^4(\O)$, and $\psi_1,\psi_2\in C^2(\O)\cap C(\bar\O)$ be two obstacle functions such that
\begin{equation}\label{eq:ObstacleCondition}
 \psi_1<\psi_2\;\;\text{in}\;\;\O\quad\text{and}\quad \psi_1<g<\psi_2\;\;\text{on}\;\p\O.
\end{equation}
 Consider the following problem: Find $u\in H^2(\O)$ such that
\begin{equation}\label{eq:ObstacleProblem}
  u=\mathop{\rm argmin}_{v\in K}G(v),
\end{equation}
 where
\begin{align}
 K&=\{v\in H^2(\O):\,u-g\in H^2_0(\O),\;\psi_1\leq v\leq \psi_2\;\text{on}\;\O\},\label{eq:KDef}\\
 G(v)&=\frac12 a(v,v)-(f,v),\label{eq:GDef}\\
 a(v,w)&=\int_\O \nabla^2v:\nabla^2 w\,dx,\quad (f,v)=\int_\O fv\,dx\label{eq:aDef}
\end{align}
 and $\nabla^2 v:\nabla^2 w=\sum_{i,j=1}^2 v_{x_ix_j}w_{x_ix_j}$ is the
 (Frobenius) inner product of the Hessian matrices of $v$ and $w$.
\par
 Since $K$ is a nonempty closed convex subset of $H^2(\O)$ and $a(\cdot,\cdot)$ is symmetric
 and coercive on
  $H^2_0(\O)$ which contains the set $K-K$, it follows from the standard theory
  \cite{LS:1967:VarInequalities,GLT:1981:VI,KS:1980:VarInequalities,Friedman:1988:VI} that
   \eqref{eq:ObstacleProblem} has
  a unique solution $u\in K$ characterized by the following variational inequality:
\begin{equation}\label{eq:VariationalInequality}
  a(u,v-u)\geq (f,v-u)\qquad\forall\,v\in K.
\end{equation}
\par
 The convergence of finite element methods for second order obstacle problems was
 investigated in \cite{Falk:1974:VarInequality,BHR:1977:VI,BHR:1978:VI}, shortly after
 it was shown in \cite{BS:1968:VI} that the solutions for such obstacle problems
 belong to $H^2(\O)$ under appropriate regularity assumptions on the data.
 This full elliptic regularity allows the complementarity form of the
 variational inequality (in the strong sense) to be used in the convergence analysis.
\par
 In contrast, it was shown in
 \cite{Frehse:1971:VarInequality,Frehse:1973:VI,CF:1979:BiharmonicObstacle}
 that the solution $u$ of \eqref{eq:ObstacleProblem}/\eqref{eq:VariationalInequality}
 belongs to $H^3_{loc}(\O)\cap C^2(\O)$ under the assumptions above on $f$, $g$, $\psi_1$ and $\psi_2$.
 Since the obstacles are separated from each other and from the displacement boundary condition
(cf. \eqref{eq:ObstacleCondition}),
 we have $\Delta^2 u=f$ near $\p\O$.  Therefore it follows from
 the elliptic regularity theory for the biharmonic operator on polygonal domains
 \cite{BR:1980:Biharmonic,Grisvard:1985:EPN,Dauge:1988:EBV,KMR:2001:Corner} that
 $u\in H^{2+\alpha}(\cN)$  for some $\alpha\in (\frac12,1]$ in an open neighborhood $\cN$ of $\p\O$.
   The elliptic regularity index $\alpha$ is determined by
 the interior angles of $\O$ and we can take $\alpha$ to be $1$ for convex $\O$.  Thus the solution
 $u$ of \eqref{eq:ObstacleProblem}/\eqref{eq:VariationalInequality} belongs to
 $H^{2+\alpha}(\O)\cap H^3_{loc}(\O)\cap C^2(\O)$ in general.  Moreover, it is easy to construct examples where
 $u\notin H^4_{loc}(\O)$ even for smooth data \cite{CF:1979:BiharmonicObstacle}.
\par
 This lack of $H^4_{loc}(\O)$ regularity means that the complementarity form of \eqref{eq:VariationalInequality}
 only exists in a weak sense \cite{CF:1979:BiharmonicObstacle}.  Consequently convergence
  analysis based on
 the weak complementarity form of \eqref{eq:VariationalInequality} would only lead to suboptimal error
 estimates.
\par
 A new convergence analysis for finite element methods for
 \eqref{eq:ObstacleProblem}/\eqref{eq:VariationalInequality} that does not
  rely on the complementarity form of the variational inequality
 \eqref{eq:VariationalInequality} was proposed in \cite{BSZ:2012:Obstacle}, where optimal
 convergence was established for $C^1$ finite element methods, classical nonconforming finite element
 methods, and $C^0$ interior penalty methods for clamped plates ($g=0$) on convex domains.
 The results in \cite{BSZ:2012:Obstacle} were subsequently extended to general polygonal domains
 and general Dirichlet boundary conditions for a quadratic $C^0$ interior penalty method
 \cite{BSZZ:2012:Kirchhoff} and  a Morley finite element method
 \cite{BSZZ:2011:Morley}.  The goal of this paper is to extend the results in
 \cite{BSZZ:2012:Kirchhoff,BSZZ:2011:Morley} to a generalized finite element method
  for plates \cite{Davis:2011:Thesis,ODJ:2011:Plate}.
\par
 The rest of the paper is organized as follows.  We introduce the generalized finite element method
 in Section~\ref{sec:GFEM} and carry out the convergence analysis in Section~\ref{sec:Convergence}.
 Numerical results are reported in Section~\ref{sec:Numerics}.
%
\section{A Generalized Finite Element Method}\label{sec:GFEM}
 We begin with the
 construction of the approximation space $V_h$ in Section~\ref{subsec:ApproximationSpace} and define
 an interpolation
 operator from $H^2(\O)$ into $V_h$ in Section~\ref{subsec:Interpolation}.  The discrete obstacle
 problem is given in
 Section~\ref{subsec:DiscreteObstacle}.  We refer the readers to
 \cite{BBO:2003:GFEM,BB:2012:SGFEM} for various aspects of
 generalized finite element methods.
\subsection{Construction of the approximation space}\label{subsec:ApproximationSpace}
 The approximation space is based on partition of unity by flat-top functions
 \cite{MB:1996:PTU,OKH:2008:PTU}.
\subsubsection{Partition of Unity}
 Let $\phi$ be the $C^1$ piecewise polynomial function given by
\begin{equation*}
        \phi(x) =\left\{
        \begin{array}{ll}
            \phi^L(x):= (1+x)^2 (1-2x)  &\mbox{ if } x \in [-1,   0] \\
            \phi^R(x):= (1-x)^2 (1+2x) &\mbox{ if } x \in [0,   1]   \\
            0               &\mbox{ if } |x| \ge 1
        \end{array}\right. ,
\end{equation*}
 which enjoys the partition of unity property that
\begin{equation}\label{eq:PU0}
  \phi^L(x-1)+\phi^R(x)=1\qquad\text{for}\quad 0\leq x\leq 1.
\end{equation}
 We define a flat-top function $\psi_\delta$ by
\begin{equation*}
    \psi_\delta(x)=\left\{
    \begin{array}{ll}
        \phi^L\left(\frac{x-(-1+\delta)}{2\delta}\right)& \mbox{ if }x   \in[-1-\delta, -1+\delta] \\
        1                                                    & \mbox{ if }x   \in[-1+\delta, 1-\delta] \\
            \phi^R\left(\frac{x-(1-\delta)}{2\delta}\right)& \mbox{ if }x   \in[1-\delta, 1+\delta] \\
            0                                    & \mbox{ if }x\notin[-1-\delta, 1+\delta] \\
        \end{array}\right. .
\end{equation*}
 Here $\delta$ is a small number that controls the width of
 the flat-top part of this function where $\psi_\delta=1$.
\par
 For ease of presentation we take $\O$ to be a  rectangle $(a,b)\times(c,d)$.
 But the construction and analysis can be extended to
 other domains (cf. Remark~\ref{rem:LShaped}  and Examples~4 and 5 in
 Section~\ref{sec:Numerics}).
\par
 We first expand $\Omega$ to a larger rectangle $\tilde\O=(a-\gamma_1,b+\gamma_1)\times (c-\gamma_2,d+\gamma_2)$
 where $\gamma_1$ and $\gamma_2$ are two positive numbers, and then we divide
 $\tilde\O$ into disjoint congruent closed rectangular patches $Q_j$
 (cf. Figure~\ref{fig:Discretize}) with center $y_j=(y_{j,1},y_{j,2})$,
 width $h_1$ and height $h_2$,
 for $j=1,\ldots,N$.  We assume that the numbers
  $$\delta_j=\gamma_j/(h_j/2)\quad (j=1,2)$$
 belong to the interval $[\beta_1,\beta_2]$, where $\beta_1$ and $\beta_2$ are constants that
 satisfy $0<\beta_1<\beta_2<1$.
\par
  For each patch $Q_j$, let
\begin{equation*}
        \Psi_{j}(x)=\psi_{\delta_1}\Big(\frac{x_1-y_{j,1}}{h_1/2}\Big)
        \psi_{\delta_2}\Big(\frac{x_2-y_{j,2}}{h_2/2}\Big).
\end{equation*}
 It follows from \eqref{eq:PU0} that $\{ \Psi_{j}, j=1,\dots, N \}$ is a
 partition of unity in $\Omega$, i.e.,
 $$\sum_{j=1}^N\Psi_j=1\qquad\text{on}\quad\O.$$
  The flat-top region of each patch, defined by
\begin{equation*}
        Q_j^{\text{flat}}=\{x \in Q_j:\Psi_{j}(x)=1 \},
\end{equation*}
 is the rectangle
 centered at $y_j$ with width $h_1(1-\delta_1)=h_1-2\gamma_1$ and height
 $h_2(1-\delta_2)=h_2-2\gamma_2$ (cf. Figure~\ref{fig:Discretize}).
\goodbreak
\begin{remark}\label{rem:Patches}
 By construction we have (cf. Figure~\ref{fig:Discretize})
\begin{itemize}
 \item $Q_j^{\text{flat}} \cap Q_i^{\text{flat}} = \emptyset$ if $i \neq j.$
 \item The support of $ \Psi_j $ extends a horizontal distance of $\gamma_1=\delta_1(h/2)$ and a vertical distance
 of $\gamma_2=\delta_2(h/2)$ outside of the patch $Q_j$.  Hence
 the supports for $\Psi_i$ and $\Psi_j$ will intersect in a rectangular region of width
 $2\gamma_1$ or $2\gamma_2$ if $Q_i$ is a neighbor of $Q_j$.
 \item If $Q_j \cap \partial \Omega \neq \emptyset$, then $Q_j^{\text{flat}} \cap \partial
 \Omega \neq \emptyset.$
\end{itemize}
\end{remark}
\begin{figure}[h]
 \begin{center}
 \includegraphics[scale=1]{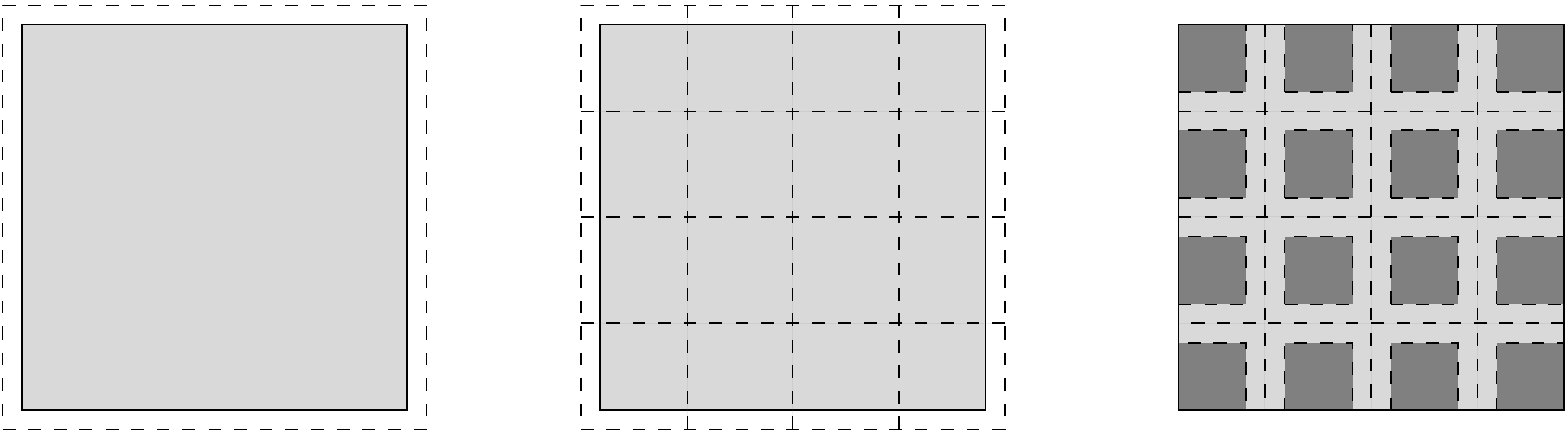}
\end{center}
\caption{Partitioning of a square domain $\Omega$:
 First we expand the shaded region $\Omega$ to $\tilde{\Omega}$ (left).  Next we
 divide $\tilde{\Omega}$ into congruent rectangles $Q_j$, $1\leq j \leq 16$ (middle).
 The flat top regions $Q_j^{\text{flat}}$, $1\leq j \leq 16$ are shown as the darker shaded regions
(right).}
\label{fig:Discretize}
\end{figure}
\subsubsection{Approximation space}
 The space $\bQ_2$ of biquadratic polynomials will serve as the local approximation space and the
 global approximation space is defined to be
\begin{equation*}
  V_h=\{\sum_{j=1}^N p_j\Psi_j:\,p_j\in \bQ_2\}.
\end{equation*}
 Below we present an explicit basis of $V_h$ that will be used in our numerical computations.
\par
  On the reference interval $[-1,1]$ we have two types of quadratic polynomials:
\begin{itemize}
 \item Lagrange interpolation polynomials $L_i(\xi)$ that satisfy  $N_i(L_j)=\delta_{ij}$
 for $1\leq i,j\leq 3$, where
  $N_1(v)=v(-1)$, $N_2(v)=v(0)$, and $N_3(v)=v(1)$.
\item Hermite interpolation polynomials $H_i(\xi)$ that satisfy  $N_i(H_j)=\delta_{ij}$ for
 $1\leq i,j\leq 3$, where
 $N_1(v)=v'(-1)$, $N_2(v)=v(-1)$, and $N_3(v)=v(1)$.
\end{itemize}
 The tensor product of different combinations of these polynomials will provide local bases
 on the two-dimensional rectangular patches.
\par
 Let $T_j:\mathbb{R}^2\longrightarrow \mathbb{R}^2$ be defined by
\begin{equation*}
 T_j(\xi_1,\xi_2)=(y_{j,1}+\xi_1(h_1/2)(1-\delta_1),y_{j,2}+\xi_2(h_2/2)(1-\delta_2)).
\end{equation*}
 Then $T_j$ maps the reference square $[-1,1]\times[-1,1]$ to the flat-top region $Q_j^{\text{flat}}$.
\par
 Depending on the location of the patch $Q_j$, we use different reference basis functions.  There
 are three possibilities.
\begin{itemize}\itemsep=5pt
 \item  For those patches such that $Q_j \cap \partial \Omega = \emptyset$, the reference
 basis functions are
\begin{equation}\label{innerpatch}
 \hat{f}_{ji}(\xi)=L_k(\xi_1)L_l(\xi_2),\quad i=3(l-1)+k,\quad 1\leq k,l \leq 3.
\end{equation}
 \item For those patches such that $Q_j$ intersects the boundary on only one side, say the vertical
 edge $x_1=a$ of $\O$,
 the reference basis functions are
\begin{equation}\label{sidepatch}
 \hat{f}_{ji}(\xi)=H_k(\xi_1)L_l(\xi_2),\quad i=3(l-1)+k,\quad 1\leq k,l \leq 3.
\end{equation}
 Note that in this case $T_j$ maps the line $\xi_1=-1$ to
 the part of $Q_j$ that intersects $\partial \Omega$.  The cases where $Q_j$ intersects other sides of
 $\O$ can be treated analogously.
\item For those patches such that $Q_j$
 intersects a corner of $\O$, say the lower left corner $(a,c)$, the reference
 basis functions are
\begin{equation}\label{cornerpatch}
 \hat{f}_{ji}(\xi)=H_k(\xi_1)H_l(\xi_2),\quad i=3(l-1)+k,\quad 1\leq k,l \leq 3.
\end{equation}
 Note that in this case $T_j$ maps the corner $(-1,-1)$ of the reference square to the
 lower left corner $(a,c)$ of $\O$.  The cases where $Q_j$ intersects other corners of $\O$ can be
 treated analogously.
\end{itemize}
\par
 The nodal variables (or degrees of freedom) for the local approximation space are depicted in
 Figure~\ref{fig:SquareRef}, where pointwise evaluations of functions, directional derivatives, gradients
 and mixed second order derivatives are represented by solid dots, arrows, circles and double arrows
 respectively.
\begin{figure}[h]
\begin{center}
\includegraphics[scale=1]{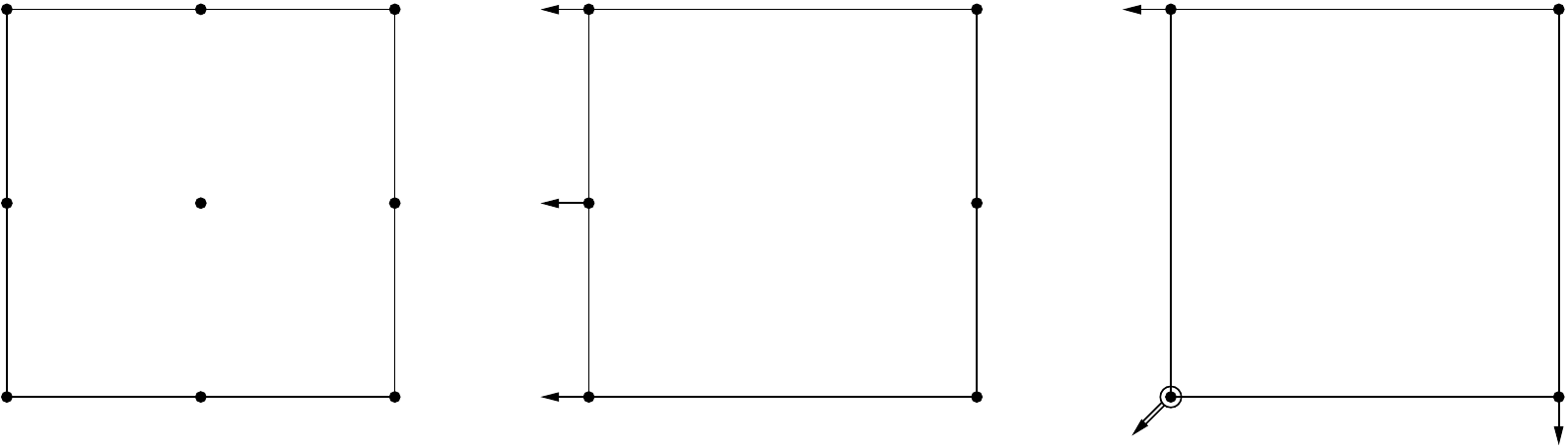}
 \caption{Reference elements described in (\ref{innerpatch}),
(\ref{sidepatch}), and (\ref{cornerpatch}) respectively.}
\label{fig:SquareRef}
\end{center}
\end{figure}
\par
 An explicit basis for the global approximation space $V_h$ is then given by
\begin{equation}\label{eq:GlobalBasis}
   \left\{\Psi_{j}\left[\hat{f}_{ji} \circ T_j^{-1} \right] :j=1,2,\dots,N; i=1,2,\dots ,9 \right\}.
\end{equation}
\begin{remark}\label{rem:NodePlacements}
  Since all the nodes in a rectangular patch $Q_j$ are located in $Q_j^{\text{flat}}$ where $\Psi_k=0$ for
  $k\neq j$, all the basis functions of $V_h$ vanish at the nodes in $Q_j$ except those
  associated with $Q_j$.
\end{remark}
\par
 Figure~\ref{fig:dofplacement} illustrates the degrees of freedom associated with
 the basis of the global approximation space for a square which is divided into $9$ square patches
 where $h_1=h_2=h$ and $\delta_1=\delta_2=\delta$.
\begin{figure}[h]
\begin{center}
 \includegraphics[scale=.9]{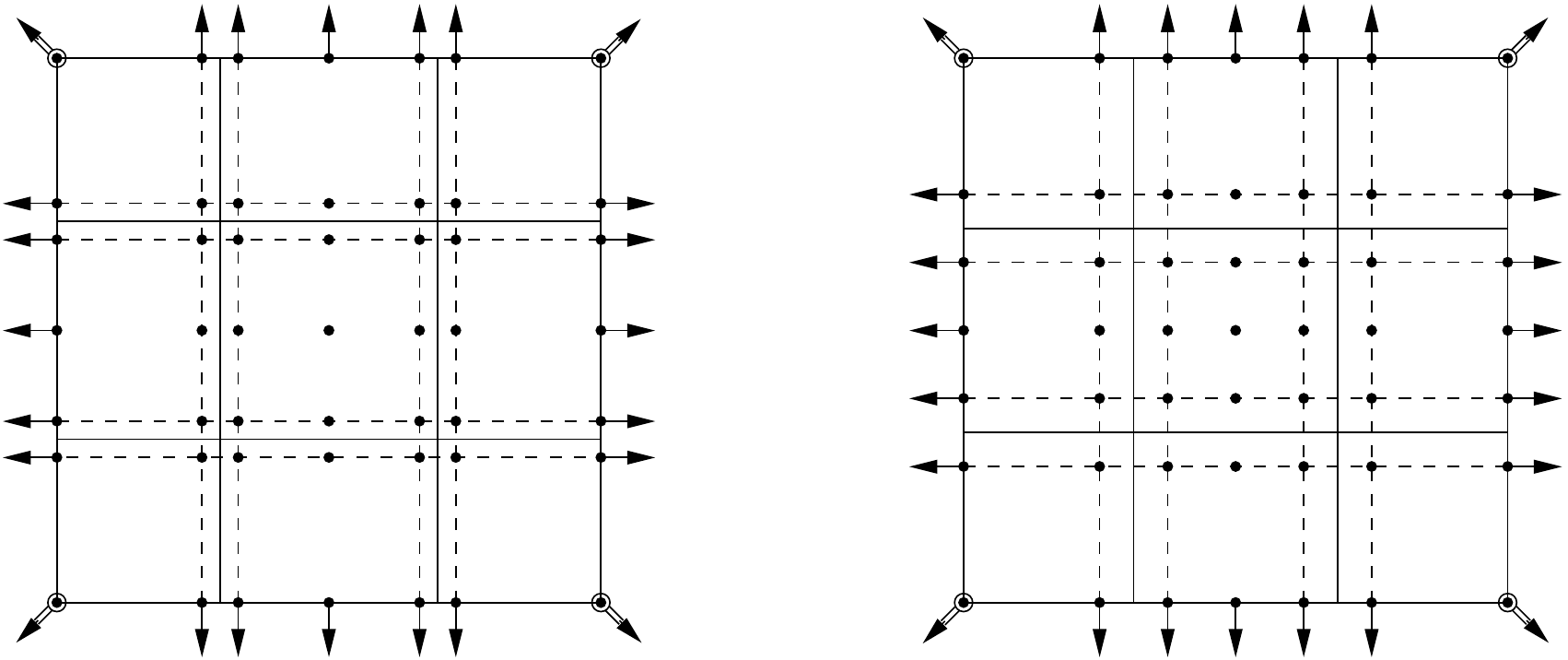}
 \caption{A partition of a domain $\Omega$ differing only by the choice of $\delta$.  The
  solid lines separate the different patches $Q_j, j=1, \dots, 9$.  The dashed lines represent
 the extension of $Q_j$ by $\delta(h/2)$ on each side.  This figure also shows the location of the
 degrees of freedom corresponding to $\delta=1/6$ (left) and $\delta=1/3$ (right).}
\label{fig:dofplacement}
\end{center}
\end{figure}
\begin{remark} \label{rem:LShaped}
  One may follow the same procedure for non-convex polygonal domains.  As an example,
 consider an  $L$-shaped domain $(-a,a)^2 \backslash
 [0,a]^2$.  One could divide the domain into rectangular patches
 everywhere except near the reentrant corner.  Near the reentrant corner, one could construct
 local biquadratic polynomial basis functions in the reference $L$-shaped domain $(-1,1)^2 \backslash
 [0,1]^2$ dual to the nodal variables
\goodbreak
%
\begin{alignat*}{9}
   &\phantom{a}&&&&&&&&\\[-40pt]
 N_1(v)&=v(0,0),&\quad& N_2(v)&&=v(1,0),&\quad& N_3(v)&&=v(0,1),   \\
 N_4(v)&=v(-1,-1),&\quad& N_5(v)&&=\frac{\partial v}{\partial \xi_1}(0,0),&\quad&
 N_6(v)&&=\frac{\partial v}{\partial \xi_1}(0,1),   \\
 N_7(v)&=\frac{\partial v}{\partial \xi_2}(0,0),&\quad& N_8(v)&&=\frac{\partial v}{\partial \xi_2}(0,1),
 &\quad& N_9(v)&&=\frac{\partial^2 v}{\partial \xi_1 \xi_2}(0,0),
\end{alignat*}
 as depicted in Figure~\ref{fig:LSHAPE}.
\begin{figure}[h]
\begin{center}
\includegraphics[scale=.9]{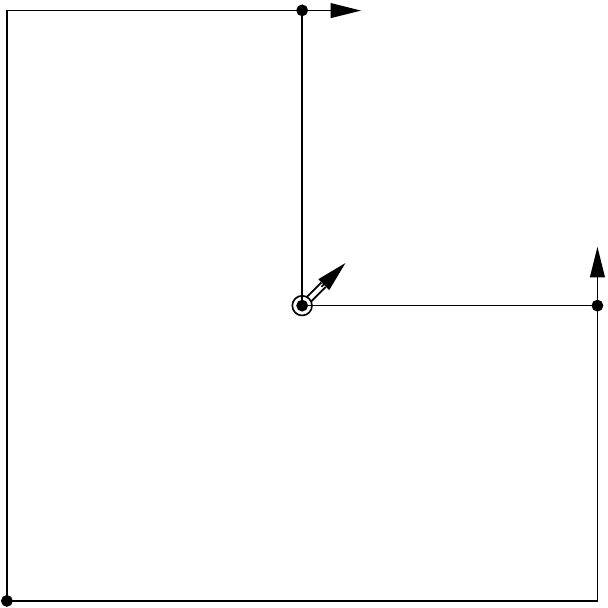}
\caption{Reference element for the $L$-shaped domain.}
\label{fig:LSHAPE}
\end{center}
\end{figure}
\end{remark}
\subsection{Interpolation Operator}\label{subsec:Interpolation}
 First we define interpolation operators associated with the rectangular patches.
 Let $\zeta\in H^2(\R^2)$.
\begin{itemize}\itemsep=5pt
\item For a patch with the local basis given by \eqref{innerpatch}
(cf. the reference element on the left of Figure~\ref{fig:SquareRef}),
 we define $\Pij\zeta$ to be the polynomial in $\bQ_2$ such that
   $(\Pij\zeta)\circ T_j=\zeta\circ T_j$
 at the 9 points in the set $\{(p,q):\,p,q=-1,0,1\}$.
\item For a patch with the local basis given by \eqref{sidepatch}
(cf. the reference element in the middle of Figure~\ref{fig:SquareRef}),
 we define $\Pij\zeta$ to be the polynomial in $\bQ_2$ such that
\smallskip
\begin{itemize}\itemsep=4pt
  \item[(i)] $(\Pij\zeta)\circ T_j=\zeta\circ T_j$ at the 6 points in the set
  $\{(p,q):p=-1,1$ and $q=-1,0,1\}$.
 \item[(ii)] The polynomial $({\p [(\Pij\zeta)\circ T_j]}/{\p \xi_1})|_{\xi_1=-1}$
 equals the quadratic polynomial
  $\hat\lambda(\xi_2)$, which is the $L_2$ projection of
  $({\p (\zeta\circ T_j)}/{\p \xi_1})|_{\xi_1=-1}$ into the space of quadratic
  polynomials in the variable $\xi_2$.
\end{itemize}
\item For a patch with the local basis given by \eqref{cornerpatch}
(cf. the reference element on the right of Figure~\ref{fig:SquareRef}),
 we define $\Pij\zeta$ to be the polynomial in $\bQ_2$ such that
\smallskip
\begin{itemize}\itemsep=4pt
\item[(i)] $(\Pij\zeta)\circ T_j=\zeta\circ T_j$ at the 4 points in the set
  $\{(p,q):p,q=\pm1\}$.
\item[(ii)] $(\p [(\Pij\zeta)\circ T_j]/\p \xi_1)|_{\xi_1=-1}$ equals the
     quadratic polynomial $\hat\lambda$ at $\xi_2=\pm1$,
   where $\hat\lambda(\xi_2)$ is the $L_2$ projection of
  $({\p (\zeta\circ T_j)}/{\p \xi_1})|_{\xi_1=-1}$ into the space of
    quadratic polynomials in the variable $\xi_2$.
\item[(iii)] $(\p [(\Pij\zeta)\circ T_j]/\p \xi_2)|_{\xi_2=-1}$ equals the
   quadratic polynomial $\hat\mu$ at $\xi_1=\pm1$,
   where $\hat\mu(\xi_1)$ is the $L_2$ projection of
  $({\p (\zeta\circ T_j)}/{\p \xi_2})|_{\xi_2=-1}$ into the space of
  quadratic polynomials in the variable $\xi_1$.
\item[(iv)] The value of
 $(\p^2[(\Pij\zeta)\circ T_j]/\p \xi_1\p \xi_2)$ at $(-1,-1)$ equals $(\hat\lambda'(-1)+\hat\mu'(-1))/2$.
\end{itemize}
\end{itemize}
\begin{remark}\label{rem:LocalInterpolationOperator}
  Since $T_j$ maps the reference square to $Q_j^{\text{flat}}$,
  the interpolant $\Pij\zeta$ is determined by the restriction of $\zeta$ to $Q_j^{\text{flat}}$.
\end{remark}
\par
 We can now define the global interpolation operator $\Pi_h:H^2(\O)\longrightarrow V_h$ by
\begin{equation*}
 \Pi_h \zeta =\sum_{j=1}^N (\Pi_j \zeta_\dag)\Psi_j\qquad\forall\,\zeta\in H^2(\O),
\end{equation*}
 where $\zeta_\dag\in H^2(\R^2)$ is any extension of $\zeta$.  The interpolant $\Pi_h$ is independent of the
 choice of $\zeta_\dag$ by Remark~\ref{rem:LocalInterpolationOperator}.  Moreover, by construction we have
\begin{equation}\label{eq:PihProperty}
  \Pi_h(H^2_0(\O))=V_h\cap H^2_0(\O).
\end{equation}
\par
 Let $\tQ_j$ be the rectangle centered at $y_j$ with width $h_1(1+\delta_1)=h_1+2\gamma_1$ and
 height $h_2(1+\delta_2)=h_2+2\gamma_2$.  Let $h=\max(h_1,h_2)$.  Since $\Pij P=P$ for any $P\in\bQ_2$, the estimate
\begin{equation}\label{eq:LocalInterpolationError}
   \sum_{m=0}^2h^m|\zeta-\Pij \zeta|_{H^m(\tQ_j\cap\O)}\leq Ch^{2+\alpha}|\zeta|_{H^{2+\alpha}(\tQ_j\cap\O)}
\end{equation}
 follows from the Bramble-Hilbert lemma  \cite{BH:1970:Lemma,DS:1980:BH} and scaling.  From here on we use $C$
 to denote a generic positive constant that is independent of the mesh size $h$.
\par
 Combining the local interpolation error estimate \eqref{eq:LocalInterpolationError} and the estimates
  for the partition of unity functions $\Psi_j$ in \cite{OKH:2008:PTU}, we immediately have
  (cf. \cite{MB:1996:PTU,OKH:2008:PTU}) the following error estimates for the global
  interpolation operator $\Pi_h$:
\begin{equation}\label{eq:PihEst}
  \sum_{m=0}^2 h^m|\zeta-\Pi_h \zeta|_{H^m(\O)} \leq C h^{2+\alpha}|\zeta |_{H^{2+\alpha}(\O)}.
\end{equation}
\begin{remark}\label{rem:Bicubic}
  Classical rectangular $C^1$ finite element methods
  would require a local approximation space that is at least bi-cubic \cite{BFS:1965:Element}.
  Of course we can also use bi-cubic polynomials as the local approximation space in
  our GFEM (cf. \cite{Davis:2011:Thesis,ODJ:2011:Plate} and Example~1 in Section~\ref{sec:Numerics}).
\end{remark}
%
\subsection{The Discrete Obstacle Problem}\label{subsec:DiscreteObstacle}
 Let $\cV_h$ be the set of the nodes in the rectangular patches corresponding to the degrees of
 freedom involving pointwise evaluation of the local basis functions.  (Such nodes
 are represented by solid dots in Figure~\ref{fig:dofplacement} and Figure~\ref{fig:LSHAPE}.)
 The GFEM for the model problem is to find $u_h\in K_{h}$ such that
\begin{equation}\label{eq:DiscreteObstacle}
 u_h=\mathop{\rm argmin}_{v\in K_h} G(v),
\end{equation}
 where the quadratic functional $G$ is defined by \eqref{eq:GDef}--\eqref{eq:aDef} and
\begin{eqnarray}\label{eq:KhDef}
 K_h &=& \{v\in V_h : v-\Pi_h g\in H^2_0(\Omega),\, \psi_1(p) \leq v(p) \leq \psi_2(p)
\quad \forall p\in \mathcal{V}_h \}.
\end{eqnarray}
\begin{remark}\label{rem:Kh} Approximation of the essential boundary conditions $u=g$ and
 ${\partial u}/{\partial n}={\partial g}/{\partial n}$ are both
 included in the definition of $K_{h}$.  Moreover $K_{h}$ is nonempty because $\Pi_h K\subset K_{h}$ by
 \eqref{eq:KDef} and \eqref{eq:PihProperty}.
\end{remark}
\begin{remark}\label{rem:BoxConstraints}
  In view of Remark~\ref{rem:NodePlacements} and the defining properties of the polynomials
  $L_i$ and $H_i$, the constraints defining $K_h$ are box constraints with respect to the
  basis of $V_h$ defined in \eqref{eq:GlobalBasis}.
\end{remark}
\par
 It follows from the standard theory that the discrete obstacle problem (\ref{eq:DiscreteObstacle})
 has a unique solution characterized by the discrete variational inequality
\begin{equation}\label{eq:DiscreteVI}
 a(u_h, v-u_h)\geq (f, v-u_h) \quad \forall v \in K_h.
\end{equation}
%
\section{Convergence Analysis}\label{sec:Convergence}
%
 We begin with some preliminary estimates in Section~\ref{subsec:Preliminary} and introduce an
 auxiliary obstacle problem in Section~\ref{subsec:AuxiliaryProblem} that connects the continuous problem
 \eqref{eq:ObstacleProblem} and the discrete problem \eqref{eq:DiscreteObstacle}.  The main result is derived
 in Section~\ref{subsec:Errors}.
\subsection{Preliminary Estimates}\label{subsec:Preliminary}
 In view of \eqref{eq:PihEst}, it suffices to find an optimal estimate for $|\Pi_hu-u_h|_{H^2(\O)}$.
 Using the discrete variational inequality \eqref{eq:DiscreteVI}, we have
\begin{align*}
 |\Pi_hu-u_h|_{H^2(\O)}^2&=a(\Pi_hu-u,\Pi_hu-u_h)+a(u-u_h,\Pi_hu-u_h)\\
      &\leq |\Pi_hu-u|_{H^2(\O)}|\Pi_hu-u_h|_{H^2(\O)}+a(u,\Pi_hu-u_h)-(f,\Pi_hu-u_h)\\
      &\leq \frac12|\Pi_hu-u|_{H^2(\O)}^2+\frac12|\Pi_hu-u_h|_{H^2(\O)}^2
      +a(u,\Pi_hu-u_h)-(f,\Pi_hu-u_h),
\end{align*}
 which implies
\begin{equation}\label{eq:PreliminaryEst}
  |\Pi_hu-u_h|_{H^2(\O)}^2\leq |\Pi_hu-u|_{H^2(\O)}^2+2\big[a(u,\Pi_hu-u_h)-(f,\Pi_hu-u_h)\big].
\end{equation}
 We can therefore complete the error analysis by finding an optimal estimate for the expression
 $a(u,\Pi_hu-u_h)-(f,\Pi_hu-u_h)$.
\par
 The following result is useful for the error analysis in Section~\ref{subsec:Errors}.
\begin{lemma}\label{lem:2alpha}
  There exists a positive constant $C$ independent of $h$ such that
\begin{equation}\label{eq:2alphaEst}
  |a(\phi,\zeta-\Pi_h\zeta)|\leq Ch^{2\alpha}\|\phi\|_{H^{2+\alpha}(\O)}
    \|\zeta\|_{H^{2+\alpha}(\O)}
\end{equation}
  for all
   $ \phi\in H^{2+\alpha}(\O)$ and $\zeta\in H^{2+\alpha}(\O)\cap H^2_0(\O)$.
\end{lemma}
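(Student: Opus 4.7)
The plan is to split $a(\phi,\zeta-\Pi_h\zeta)$ via the partition of unity and then, on each patch, subtract off a constant‐Hessian approximation of $\nabla^2\phi$. The residual piece will be bounded by Cauchy--Schwarz (gaining one factor $h^\alpha$ from Bramble--Hilbert on $\nabla^2\phi$ and another $h^\alpha$ from the local interpolation error for $\zeta$), while the constant part will be killed by an integration‐by‐parts argument that uses $\zeta\in H^2_0(\O)$ and the way $\Pij$ inherits homogeneous boundary data.

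\textbf{Step 1 (localization).} Using $\sum_j\Psi_j\equiv 1$ on $\O$ and the definition of $\Pi_h$, write
\begin{equation*}
\zeta-\Pi_h\zeta=\sum_j(\zeta-\Pij\zeta_\dag)\Psi_j,\qquad
a(\phi,\zeta-\Pi_h\zeta)=\sum_j\int_{\tQ_j\cap\O}\nabla^2\phi:\nabla^2\bigl[(\zeta-\Pij\zeta_\dag)\Psi_j\bigr]\,dx.
\end{equation*}
For each $j$ let $M_j$ be the average of $\nabla^2\phi$ on $\tQ_j\cap\O$ (a constant symmetric matrix), and split the integrand as $(\nabla^2\phi-M_j):\nabla^2[\cdots]+M_j:\nabla^2[\cdots]$.

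\textbf{Step 2 (the residual piece).} For the part involving $\nabla^2\phi-M_j$, apply Cauchy--Schwarz on each patch. The Bramble--Hilbert lemma applied to the components of $\nabla^2\phi\in H^\alpha(\tQ_j\cap\O)$ gives
\begin{equation*}
\|\nabla^2\phi-M_j\|_{L_2(\tQ_j\cap\O)}\le Ch^\alpha|\phi|_{H^{2+\alpha}(\tQ_j\cap\O)}.
\end{equation*}
On the other factor, the product rule together with the standard flat-top estimates $\|\p^\beta\Psi_j\|_{L_\infty}\le Ch^{-|\beta|}$ (cf.\ \cite{OKH:2008:PTU}) and the local interpolation estimate \eqref{eq:LocalInterpolationError} yields
\begin{equation*}
\bigl|(\zeta-\Pij\zeta_\dag)\Psi_j\bigr|_{H^2(\tQ_j\cap\O)}
\le C\sum_{m=0}^2 h^{m-2}|\zeta-\Pij\zeta_\dag|_{H^m(\tQ_j\cap\O)}
\le Ch^\alpha|\zeta|_{H^{2+\alpha}(\tQ_j\cap\O)}.
\end{equation*}
Multiplying, summing in $j$, and using the bounded overlap of the $\tQ_j$'s via Cauchy--Schwarz over the index set produces the target bound $Ch^{2\alpha}\|\phi\|_{H^{2+\alpha}(\O)}\|\zeta\|_{H^{2+\alpha}(\O)}$ for this portion.

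\textbf{Step 3 (killing the constant part).} For the remaining term I would show
\begin{equation*}
\int_{\tQ_j\cap\O}\nabla^2\bigl[(\zeta-\Pij\zeta_\dag)\Psi_j\bigr]dx=0
\end{equation*}
by integration by parts (twice), which reduces the claim to the vanishing of $(\zeta-\Pij\zeta_\dag)\Psi_j$ and its gradient on $\p(\tQ_j\cap\O)$. On $\p\tQ_j\cap\O$, both $\Psi_j$ and $\nabla\Psi_j$ vanish because $\psi_\delta$ and $\psi_\delta'$ vanish at $\pm(1+\delta)$, so the product formula yields the whole gradient. On $\p\O\cap\tQ_j$ (relevant only for side/corner patches) I would exploit $\zeta\in H^2_0(\O)$ together with the interpolation conditions of Section~\ref{subsec:Interpolation}: since the normal derivative of $\zeta$ is zero on $\p\O$, its $L_2$ projection (used to pin down $\Pij\zeta_\dag$) is zero as well, and together with the pointwise interpolation of zeros this forces $\Pij\zeta_\dag$ to have $(1+\xi_1)^2$ (and/or $(1+\xi_2)^2$) as a factor in local coordinates. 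Consequently $\Pij\zeta_\dag$ and both of its first partials vanish on $\p\O\cap\tQ_j$, which gives the required gradient-vanishing of the product $(\zeta-\Pij\zeta_\dag)\Psi_j$. For truly interior patches, $\tQ_j$ lies inside $\O$ once $h$ is small enough (because $\gamma_i<h_i/2$), so only the $\p\tQ_j$ piece of the boundary appears.

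\textbf{Expected main obstacle.} The substantive step is the last one: verifying that the side/corner patch interpolants $\Pij\zeta_\dag$ inherit \emph{both} vanishing traces and vanishing full gradients on $\p\O$, not just the normal derivative that was explicitly imposed. This is a tensor-product polynomial argument particular to $\bQ_2$, relying on the fact that a $\bQ_2$ function with three zeros on a reference edge must be divisible by the corresponding linear factor. Once this factorization is in hand, the integration-by-parts cancellation is automatic and the quoted estimate follows by combining Steps 2 and 3.
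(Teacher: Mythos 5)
Your argument is correct, but it takes a genuinely different route from the paper's proof. The paper works globally: it proves the crude bound \eqref{eq:2alphaEst1} (giving $O(h^\alpha)$ for $\phi\in H^2(\O)$ via \eqref{eq:PihEst}) and the bound \eqref{eq:2alphaEst2} (giving $O(h^{1+\alpha})$ for $\phi\in H^3(\O)$ via a single global integration by parts, legitimate because \eqref{eq:PihProperty} places $\zeta-\Pi_h\zeta$ in $H^2_0(\O)$), and then interpolates between Sobolev spaces in the $\phi$-slot to reach $O(h^{2\alpha})$ for $\phi\in H^{2+\alpha}(\O)$. You instead localize with the partition of unity, subtract the patchwise mean Hessian $M_j$ (a fractional Bramble--Hilbert step supplying one factor $h^\alpha$), combine \eqref{eq:LocalInterpolationError} with the bounds $\|\p^\beta\Psi_j\|_{L_\infty}\leq Ch^{-|\beta|}$ for the second factor $h^\alpha$, and cancel the constant-Hessian contribution by patchwise integration by parts. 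The step you flag as the main obstacle is indeed true and provable exactly as you indicate: for $\zeta\in H^2_0(\O)$ the point values and the projected edge derivatives used to define $\Pij\zeta_\dag$ on side/corner patches all vanish, so unisolvency of the Hermite-type nodal variables forces the factor $(1+\xi_1)^2/4$ (and/or $(1+\xi_2)^2/4$) in the interpolant, whence $\Pij\zeta_\dag$ and its full gradient vanish along the relevant boundary lines; this is the same structural fact that underlies the paper's assertion \eqref{eq:PihProperty}, so it is fully consistent with the construction (also, interior patches satisfy $\tQ_j\subset\O$ automatically since $\delta_i<1$, no smallness of $h$ is needed). What each approach buys: yours avoids the interpolation-space machinery entirely and never requires $\phi\in H^3(\O)$, at the cost of local bookkeeping that should be stated (uniformity of the fractional Poincar\'e constant on the sets $\tQ_j\cap\O$, which holds since these are rectangles of diameter comparable to $h$, and the bounded-overlap summation of the squared fractional seminorms over the patches); the paper's proof is shorter and pushes all locality into the already-established estimates \eqref{eq:PihEst} and \eqref{eq:PihProperty}.
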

\begin{proof}  Let $\zeta\in H^{2+\alpha}(\O)\cap H^2_0(\O)$ be arbitrary.
  On the one hand we  have an obvious estimate
\begin{align}\label{eq:2alphaEst1}
 |a(\phi,\zeta-\Pi_h\zeta)|&\leq |\phi|_{H^2(\O)}|\zeta-\Pi_h\zeta|_{H^2(\O)}\\
   &\leq Ch^\alpha|\phi|_{H^2(\O)}|\zeta|_{H^{2+\alpha}(\O)}
   \qquad\forall\,\phi\in H^2(\O)\notag
\end{align}
 that follows from \eqref{eq:PihEst}.  On the other hand,
 we have another estimate
\begin{align}\label{eq:2alphaEst2}
 |a(\phi,\zeta-\Pi_h\zeta)|&=\Big|\int_\O (\nabla\cdot\nabla^2 \phi)\cdot\nabla(\zeta-\Pi_h\zeta)\,dx\Big|
   \notag\\
   &\leq C|\phi|_{H^3(\O)}|\zeta-\Pi_h\zeta|_{H^1(\O)}\\
   &\leq Ch^{1+\alpha}|\phi|_{H^3(\O)}|\zeta|_{H^{2+\alpha}(\O)}
   \qquad\forall \,\phi\in H^3(\O)\notag
\end{align}
  that follows from \eqref{eq:PihProperty},
  \eqref{eq:PihEst} and  integration by parts.
\par
  The estimate \eqref{eq:2alphaEst}  follows from \eqref{eq:2alphaEst1},
  \eqref{eq:2alphaEst2} and interpolation between Sobolev spaces
  \cite{ADAMS:2003:Sobolev,Tartar:2007:Sobolev}.
\end{proof}
\subsection{An Auxiliary Obstacle Problem}\label{subsec:AuxiliaryProblem}
 We can connect the continuous obstacle problem \eqref{eq:ObstacleProblem} and
 the discrete obstacle problem (\ref{eq:DiscreteObstacle}) through an intermediate obstacle problem:
 Find $\tilde{u}_h \in \tilde{K}_h$ such that
\begin{equation}\label{auxiliary_prob}
\tilde{u}_h=\mathop{\rm argmin}_{v\in \tilde K_h} G(v)
\end{equation}
 where
\begin{equation}\label{eq:tKhDef}
  \tilde{K}_h = \{ v\in H^2(\Omega) : v-g \in H^2_0(\Omega), \psi_1(p) \leq v(p)
  \leq \psi_2(p) \quad \forall p \in \mathcal{V}_h \}.
\end{equation}
\par
 Note that $\tilde{K}_h$ is a closed convex subset of $H^2(\Omega)$ and $K \subset \tilde{K}_h.$
 The unique solution of (\ref{auxiliary_prob}) is characterized by the variational inequality:
\begin{equation}\label{eq:AuxiliaryVI}
 a(\tilde{u}_h, v-\tilde{u}_h) \geq (f, v-\tilde{u}_h) \quad \forall v \in \tilde{K}_h.
\end{equation}
\par
 The connection between \eqref{eq:ObstacleProblem} and (\ref{auxiliary_prob})
 is given by the following properties of $\tilde{u}_h$ from \cite{BSZ:2012:Obstacle,BSZZ:2012:Kirchhoff}:
\begin{equation}\label{eq:tuhEst}
 |u-\tilde{u}_h|_{H^2(\Omega)}\leq C h,
\end{equation}
 and there exists $h_0 >0$ such that
\begin{equation}\label{eq:uhat}
 \hat{u}_h=\tilde{u}_h+\delta_{h,1}\phi_1 - \delta_{h,2} \phi_2 \in K \quad \forall \,h \leq h_0,
\end{equation}
 where $\phi_1$ and $\phi_2$ are $C^\infty$ functions with compact supports in $\O$ such that
 $\phi_i=1$ on the coincidence set $\{x\in\O:\,u(x)=\psi_i(x)\}$,
 and the positive numbers $\delta_{h,1}$ and
 $\delta_{h,2}$ satisfy
\begin{equation}\label{eq:deltah}
\delta_{h,i} \leq C h^2.
\end{equation}
\par
 Note that $v-\Pi_hg\in H^2_0(\O)$ for all $v\in K_h$ (cf. \eqref{eq:KhDef}) and hence, by
  \eqref{eq:tKhDef},
\begin{equation}\label{eq:KhandtKh}
  v+(g-\Pi_h g)\in \tK_h\qquad\forall\,v\in K_h.
\end{equation}
\subsection{Error Estimates for the Generalized Finite Element Method}\label{subsec:Errors}
 We now complete the error analysis of the generalized finite
 element method by deriving an optimal estimate for the expression $a(u,\Pi_h-u_h)-(f,\Pi_hu-u_h)$.
 To simplify the presentation, we introduce the transitive relation $A\dotleq B$ defined by
\begin{equation*}
  A\dotleq B\Leftrightarrow A-B\leq C(h^{2\alpha}+h^\alpha |\Pi_hu-u_h|_{H^2(\O)}).
\end{equation*}
\par
 Since
\begin{equation*}
  a(u,\Pi_hu-u_h)=a(u-\tu_h,\Pi_hu-u_h)+a(\tu_h,\Pi_hu-u_h)
\end{equation*}
 and
\begin{equation*}
  a(u-\tu_h,\Pi_hu-u_h)\leq |u-\tu_h|_{H^2(\O)}|\Pi_hu-u_h|_{H^2(\O)}\leq Ch
   |\Pi_hu-u_h|_{H^2(\O)}
\end{equation*}
 by the estimate \eqref{eq:tuhEst},  we have
\begin{equation}\label{eq:DotInequality1}
 a(u,\Pi_hu-u_h)-(f,\Pi_hu-u_h)\dotleq a(\tu_h,\Pi_hu-u_h)-(f,\Pi_hu-u_h).
\end{equation}
\par
 In view of \eqref{eq:KhandtKh}, we can use the auxiliary variational inequality
 \eqref{eq:AuxiliaryVI} to obtain
\begin{align*}
   a(\tu_h,\Pi_hu-u_h)&=a(\tu_h,\tu_h-u_h-(g-\Pi_hg))+a(\tu_h,\Pi_hu-\tu_h+(g-\Pi_hg))\\
         &\leq (f,\tu_h-u_h-(g-\Pi_hg))+a(\tu_h,\Pi_hu-\tu_h+(g-\Pi_hg)),
\end{align*}
 which together with \eqref{eq:DotInequality1} implies
\begin{align}\label{eq:DotInequality2}
  &a(u,\Pi_hu-u_h)-(f,\Pi_hu-u_h)\dotleq \\
    &\hspace{50pt}a(\tu_h,\Pi_hu-\tu_h+(g-\Pi_hg))-(f,u-\tu_h)-(f,(\Pi_hu-u)+(g-\Pi_hg)).\notag
\end{align}
\par
 We can rewrite the first term on the right-hand side of \eqref{eq:DotInequality2} as
\begin{align*}
  a(\tu_h,\Pi_hu-\tu_h+(g-\Pi_hg))&=a(\tu_h-u,\Pi_hu-\tu_h+(g-\Pi_hg))+a(u,\Pi_h(u-g)-(u-g))\\
       &\hspace{30pt}+a(u,u-\tu_h).
\end{align*}
 Observe that
\begin{align*}
  &a(\tu_h-u,\Pi_hu-\tu_h+(g-\Pi_hg))=a(\tu_h-u,(\Pi_hu-u)+(u-\tu_h)+(g-\Pi_hg))\\
      &\hspace{70pt}\leq |\tu_h-u|_{H^2(\O)}\big(|\Pi_hu-u|_{H^2(\O)}
      +|u-\tu_h|_{H^2(\O)}+|g-\Pi_hg|_{H^2(\O)}\big)\\
      &\hspace{70pt}\leq Ch^{1+\alpha}
\end{align*}
 by  \eqref{eq:PihEst} and  \eqref{eq:tuhEst}, and
\begin{equation*}
   a(u,\Pi_h(u-g)-(u-g))  \leq Ch^{2\alpha}
\end{equation*}
 by Lemma~\ref{lem:2alpha}.  Moreover we have, by \eqref{eq:PihEst},
\begin{equation*}
  -(f,(\Pi_hu-u)+(g-\Pi_hg))\leq \|f\|_{L_2(\O)}\big(\|\Pi_hu-u\|_{L_2(\O)}+\|g-\Pi_hg\|_{L_2(\O)}\big)
     \leq Ch^{2+\alpha}.
\end{equation*}
\par
 Combining these relations and \eqref{eq:DotInequality2}, we
 arrive at the estimate
\begin{equation}\label{eq:DotInequality3}
  a(u,\Pi_hu-u_h)-(f,\Pi_hu-u_h).
    \dotleq a(u,u-\tu_h)-(f,u-\tu_h).
\end{equation}
\par
 According to \eqref{eq:VariationalInequality}, \eqref{eq:uhat} and \eqref{eq:deltah},  we have
\begin{align*}
 & a(u,u-\tu_h)-(f,u-\tu_h)\\
   &\hspace{30pt}=a(u,u-\hat u_h)-(f,u-\hat u_h)+\delta_{h,1}[a(u,\phi_1)-(f,\phi_1)]
    -\delta_{h,2}[a(u,\phi_2)-(f,\phi_2)]\\
           &\hspace{30pt}\leq  Ch^2,
\end{align*}
\goodbreak\noindent
 and hence \eqref{eq:DotInequality3} leads to the estimate
\begin{equation*}
  a(u,\Pi_hu-u_h)-(f,\Pi_hu-u_h)
    \dotleq 0,
\end{equation*}
 which means
\begin{equation}\label{eq:KeyEst}
  a(u,\Pi_hu-u_h)-(f,\Pi_hu-u_h)\leq C(h^{2\alpha}+h^\alpha|\Pi_hu-u_h|_{H^2(\O)}).
\end{equation}
\begin{theorem}\label{thm:Error}
  There exists a positive constant $C$ independent of $h$ such that
\begin{equation*}
 |u-u_h|_{H^2(\O)}\leq Ch^{\alpha}.
\end{equation*}
\end{theorem}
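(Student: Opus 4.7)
The plan is to assemble the pieces already established — the preliminary bound \eqref{eq:PreliminaryEst}, the interpolation error estimate \eqref{eq:PihEst}, and the key estimate \eqref{eq:KeyEst} — and close the argument with a triangle inequality and a quadratic-inequality trick.

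First, I would observe that the solution $u$ of \eqref{eq:ObstacleProblem}/\eqref{eq:VariationalInequality} belongs to $H^{2+\alpha}(\O)$ by the regularity discussion in the Introduction. Hence \eqref{eq:PihEst} applied to $\zeta = u$ yields the computable bound
\begin{equation*}
  |u-\Pi_h u|_{H^2(\O)} \leq C h^{\alpha}|u|_{H^{2+\alpha}(\O)} \leq C h^\alpha.
\end{equation*}
Substituting this bound and the estimate \eqref{eq:KeyEst} into the preliminary inequality \eqref{eq:PreliminaryEst} produces, with $X := |\Pi_h u - u_h|_{H^2(\O)}$, the scalar inequality
\begin{equation*}
  X^2 \leq C h^{2\alpha} + C h^\alpha X.
\end{equation*}

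Next, I would absorb the mixed term on the right by Young's inequality, writing $C h^\alpha X \leq \tfrac12 X^2 + C' h^{2\alpha}$, so that $\tfrac12 X^2 \leq C'' h^{2\alpha}$ and therefore $X \leq C h^\alpha$. (Equivalently, the quadratic $X^2 - C h^\alpha X - C h^{2\alpha} \leq 0$ forces $X \leq C h^\alpha$ directly.) This gives the crucial control
\begin{equation*}
  |\Pi_h u - u_h|_{H^2(\O)} \leq C h^\alpha.
\end{equation*}

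Finally, the triangle inequality combines the two $H^2$-bounds:
\begin{equation*}
  |u-u_h|_{H^2(\O)} \leq |u-\Pi_h u|_{H^2(\O)} + |\Pi_h u - u_h|_{H^2(\O)} \leq C h^\alpha,
\end{equation*}
which is the desired conclusion. The genuinely hard work has already been done in establishing \eqref{eq:KeyEst} via the auxiliary obstacle problem and Lemma~\ref{lem:2alpha}; at this stage the theorem follows as a short calculation, the only mild subtlety being the use of Young's inequality to prevent the $h^\alpha X$ term from spoiling the rate.
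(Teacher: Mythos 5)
Your proposal is correct and follows exactly the paper's own argument: combine \eqref{eq:PihEst}, \eqref{eq:PreliminaryEst} and \eqref{eq:KeyEst}, absorb the mixed term $Ch^\alpha|\Pi_hu-u_h|_{H^2(\O)}$ by the arithmetic--geometric mean (Young's) inequality to get $|\Pi_hu-u_h|_{H^2(\O)}\leq Ch^\alpha$, and finish with the triangle inequality. No differences worth noting.
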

\begin{proof}  It follows from \eqref{eq:PihEst}, \eqref{eq:PreliminaryEst}, \eqref{eq:KeyEst}  and
 the arithmetic and geometric means inequality that
\begin{equation*}
 |\Pi_hu-u_h|_{H^2(\O)}^2\leq C(h^{2\alpha}+h^\alpha|\Pi_hu-u_h|_{H^2(\O)})
    \leq Ch^{2\alpha}+\frac12|\Pi_hu-u_h|_{H^2(\O)}^2,
\end{equation*}
 which implies
\begin{equation}\label{eq:Pihuanduh}
 |\Pi_hu-u_h|_{H^2(\O)}\leq Ch^\alpha.
\end{equation}
\par
 The theorem  follows from \eqref{eq:PihEst}, \eqref{eq:Pihuanduh} and the
 triangle inequality.
\end{proof}
\par
 Since $H^2(\O)$ is embedded in $C(\bar\O)$ by the Sobolev embedding
 theorem \cite{ADAMS:2003:Sobolev,Tartar:2007:Sobolev}, the following corollary is
 immediate.  But numerical results in Section~\ref{sec:Numerics} indicate that the convergence rate in the
 $L_\infty(\O)$ norm should be higher than the convergence rate in the $H^2(\O)$ norm.
\begin{corollary}\label{cor:LInftyError}
  There exists a positive constant $C$ independent of $h$ such that
\begin{equation}\label{eq:LInftyEst}
 |u-u_h|_{L_\infty(\O)}\leq Ch^{\alpha}.
\end{equation}
\end{corollary}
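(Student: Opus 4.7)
The plan is to derive \eqref{eq:LInftyEst} directly from Theorem~\ref{thm:Error} by invoking the continuous embedding $H^2(\O)\hookrightarrow C(\bar\O)$, which is valid in two space dimensions and supplies an estimate of the form $\|v\|_{L_\infty(\O)}\leq C\|v\|_{H^2(\O)}$. Since the paper itself flags the corollary as ``immediate,'' the only substantive point is that Theorem~\ref{thm:Error} controls the $H^2$-seminorm while the embedding uses the full $H^2$-norm, so a brief splitting argument is needed.

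The splitting I would use is the decomposition $u-u_h=(g-\Pi_h g)+z$, where $z:=(u-g)-(u_h-\Pi_h g)$. By the definitions \eqref{eq:KDef} and \eqref{eq:KhDef} of $K$ and $K_h$, both $u-g$ and $u_h-\Pi_h g$ belong to $H^2_0(\O)$, and hence so does $z$. On $H^2_0(\O)$ the Poincaré--Friedrichs inequality makes the full $H^2$-norm equivalent to the $H^2$-seminorm, so a triangle inequality combined with Theorem~\ref{thm:Error} and \eqref{eq:PihEst} applied to $g\in H^4(\O)\subset H^{2+\alpha}(\O)$ gives
\begin{equation*}
\|z\|_{H^2(\O)}\leq C|z|_{H^2(\O)}\leq C\bigl(|u-u_h|_{H^2(\O)}+|g-\Pi_h g|_{H^2(\O)}\bigr)\leq Ch^\alpha.
\end{equation*}
The Sobolev embedding then yields $\|z\|_{L_\infty(\O)}\leq Ch^\alpha$.

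The residual term $g-\Pi_h g$ is handled directly by \eqref{eq:PihEst}, which controls the $L_2$, $H^1$, and $H^2$ contributions of $g-\Pi_h g$ simultaneously and gives $\|g-\Pi_h g\|_{H^2(\O)}\leq Ch^\alpha|g|_{H^{2+\alpha}(\O)}$. Applying the Sobolev embedding once more produces $\|g-\Pi_h g\|_{L_\infty(\O)}\leq Ch^\alpha$, and the triangle inequality assembles these two bounds into \eqref{eq:LInftyEst}. There is no real obstacle here; the only thing one must avoid is attempting to apply the Sobolev embedding directly to $u-u_h$, which has nonzero boundary trace in general and therefore does not lie in $H^2_0(\O)$.
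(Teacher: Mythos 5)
Your proof is correct and takes essentially the same route as the paper, which simply declares the corollary immediate from the Sobolev embedding $H^2(\O)\hookrightarrow C(\bar\O)$ applied to Theorem~\ref{thm:Error}. Your splitting $u-u_h=(g-\Pi_h g)+z$ with $z\in H^2_0(\O)$, together with the Poincar\'e--Friedrichs inequality and \eqref{eq:PihEst}, correctly supplies the seminorm-to-full-norm step that the paper leaves implicit.
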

\begin{remark}\label{rem:CoincidenceSet}
  Under additional assumptions \cite{BC:1983:FreeBoundary,Nochetto:1986:FreeBoundary}
  on the exact coincidence sets (resp. free boundaries), the error
  estimate \eqref{eq:LInftyEst} implies the convergence of the discrete coincidence sets (resp.
  free boundaries) to the exact coincidence sets (resp. free boundaries).  Details can be
  found in \cite{BSZZ:2012:Kirchhoff}.
\end{remark}
\section{Numerical Results}\label{sec:Numerics}
 We present numerical results for several one-obstacle problems to
 demonstrate the performance of the GFEM.  The obstacle function from below will be
 denoted by $\psi.$  The first four examples are
 from \cite{BSZZ:2012:Kirchhoff}.  The discrete obstacle problems are solved by
 a primal dual active set strategy from \cite{BIK:1999:PDAS,HIK:2002:PDAS}.

\par\smallskip
{\bf Example 1.}
 Here we apply the GFEM to a problem with a known exact solution to
 validate the numerical results.  We begin with the plate obstacle problem on the disc
 $\{x:|x|<2\}$ with $f=0, \psi(x)=1-|x|^2$ and homogeneous Dirichlet boundary conditions.
 This problem is rotationally invariant and
 can be solved exactly.  The exact solution is
\begin{equation*}
 u(x) =
\begin{cases}
 C_1|x|^2 \ln|x|+C_2|x|^2+C_3\ln|x|+C_4, & r_0<|x|<2 \\
 1-|x|^2, & |x| \leq r_0
\end{cases}
\end{equation*}
 where $r_0\approx 0.18134452, C_1 \approx 0.52504063, C_2 \approx -0.62860904,
 C_3 \approx 0.01726640,$ and $C_4 \approx 1.04674630.$
 We then consider the obstacle problem on $\O = (-0.5,0.5)^2$ whose exact solution is
 the restriction of $u$ to $\O.$  For this problem $f=0, \psi(x)=1-|x|^2$ and the (non-homogeneous)
 Dirichlet boundary data are determined by $u.$
\par
 We partition $\O$ following the procedure described in Section~\ref{subsec:ApproximationSpace} and define
 $j$ to be the level where there are $2^{j}$ equal subdivisions in each direction.
 We solve the discrete obstacle problem on each level $j$ with $\delta=1/3$ so that
 the mesh parameter $h_j=(2^j-1/3)^{-1}$.
\par
 We denote the energy norm on the $j$-th level by $\| \cdot \|_j$.
 Let $u_j$ be the numerical solution of the $j$-th level discrete obstacle problem and
 $e_j=\Pi_j u-u_j$ where $\Pi_j$ is the interpolation operator on the $j-$th level.
 We evaluate the error $\|e_j\|_j$ in the energy norm, and the error $\|e_j\|_{\infty}$
 in the $\ell_{\infty}$ norm,
  and compute the rates of convergence in these norms by
\begin{equation*}
\beta_h=\ln(\|e_{j/2}\|_{j/2}/\|e_j\|_j)/\ln(h_{j/2}/h_j)\quad \text{ and } \quad
 \beta_{\infty}=\ln(\|e_{j/2}\|_{\infty}/\|e_j\|_{\infty})/\ln(h_{j/2}/h_j).
\end{equation*}
\par
 The numerical results are presented in Table~\ref{Ex1}. It is observed that the magnitude
 of the error in energy norm is $O(h)$.
\begin{table}[hhh]
\begin{center}
 \begin{tabular}{ | c | c r | c r | }
 \hline
  &&&&\\[-12pt]
 $j$ & $\|e_j\|_j/\|u_{8}\|_{8} $ & $\beta_h$ & $\|e_j\|_{\infty} $ & $\beta_{\infty}$ \\
 \hline &&&&\\[-12pt]
  1 &   0.0000  $\times 10^{-0}$ &    &   0.0000  $\times10^{-0}$ &   \\
 \hline
  &&&&\\[-12pt]
  2 &   1.2365  $\times 10^{-1}$ &     &   8.8312  $\times 10^{-4}$ &     \\
 \hline &&&&\\[-12pt]
  3 &   6.3226  $\times 10^{-2}$ &   0.9094 &   6.0088  $\times 10^{-4}$ &   0.5221 \\
 \hline &&&&\\[-12pt]
  4 &   2.5977  $\times 10^{-2}$ &   1.2447 &   8.8401  $\times 10^{-5}$ &   2.6817 \\
 \hline &&&&\\[-12pt]
  5 &   1.2159 $\times 10^{-2}$ &   1.0787 &   2.4443  $\times 10^{-5}$ &   1.8267 \\
  \hline&&&&\\[-12pt]
  6 &   5.9045 $\times 10^{-3}$ &   1.0343 &   6.7946 $\times 10^{-6}$ &   1.8331 \\
 \hline &&&&\\[-12pt]
  7 &   2.9125 $\times 10^{-3}$ &   1.0157 &   1.4775  $\times10^{-6}$ &   2.1929 \\
 \hline &&&&\\[-12pt]
  8 &   1.4396 $\times 10^{-3}$ &   1.0147 &   8.8608 $\times10^{-7}$ &   0.7363 \\\hline
   \end{tabular}
 \end{center}
\par\medskip
 \caption{Energy norm and $\ell_{\infty}$ norm errors for Example 1.}\label{Ex1}
 \end{table}
\par
 The exact coincidence set $I$ for this example is the disc centered at $(0,0)$ with radius $r_0$.
 Let $\mathcal{V}_j$ be the set of nodes on the $j$-th level corresponding to
 degrees of freedom involving pointwise evaluation of local basis functions in the
 interior of $\Omega$.  Then we define the discrete coincidence set $I_j$ by
\begin{equation*}
 I_j=\{p\in \mathcal{V}_j:u_j(p)-\psi(p)\leq \|e_j\|_{\infty}\}.
\end{equation*}
 The discrete coincidence sets $I_7$ and $I_8$
 are displayed in Figure~\ref{fig:CoincidenceSetExample1Q2}, where the radius of
 the circle in black is $r_0$.  The convergence of the discrete coincidence sets is observed.
\begin{figure}[h]
\centering\includegraphics[scale=0.5]{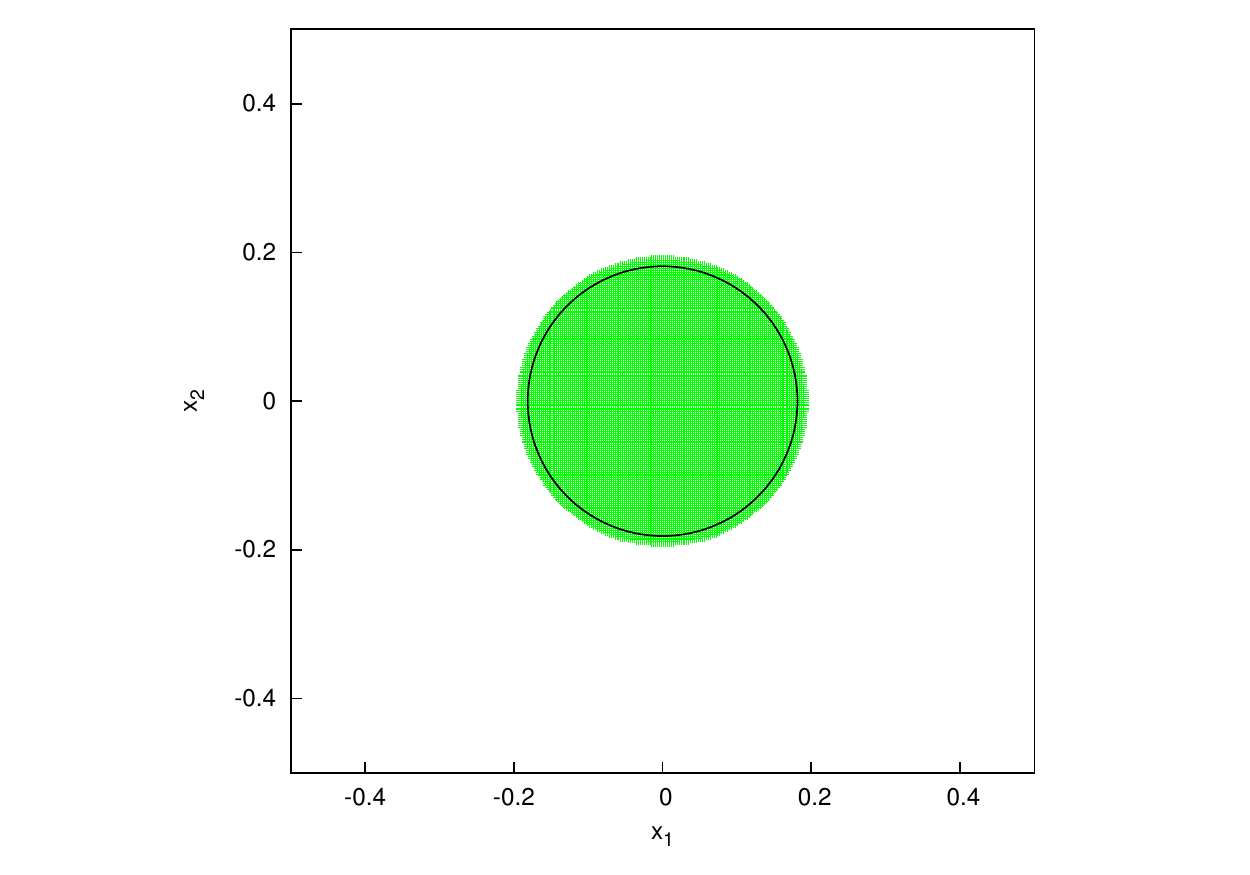}
\centering\includegraphics[scale=0.5]{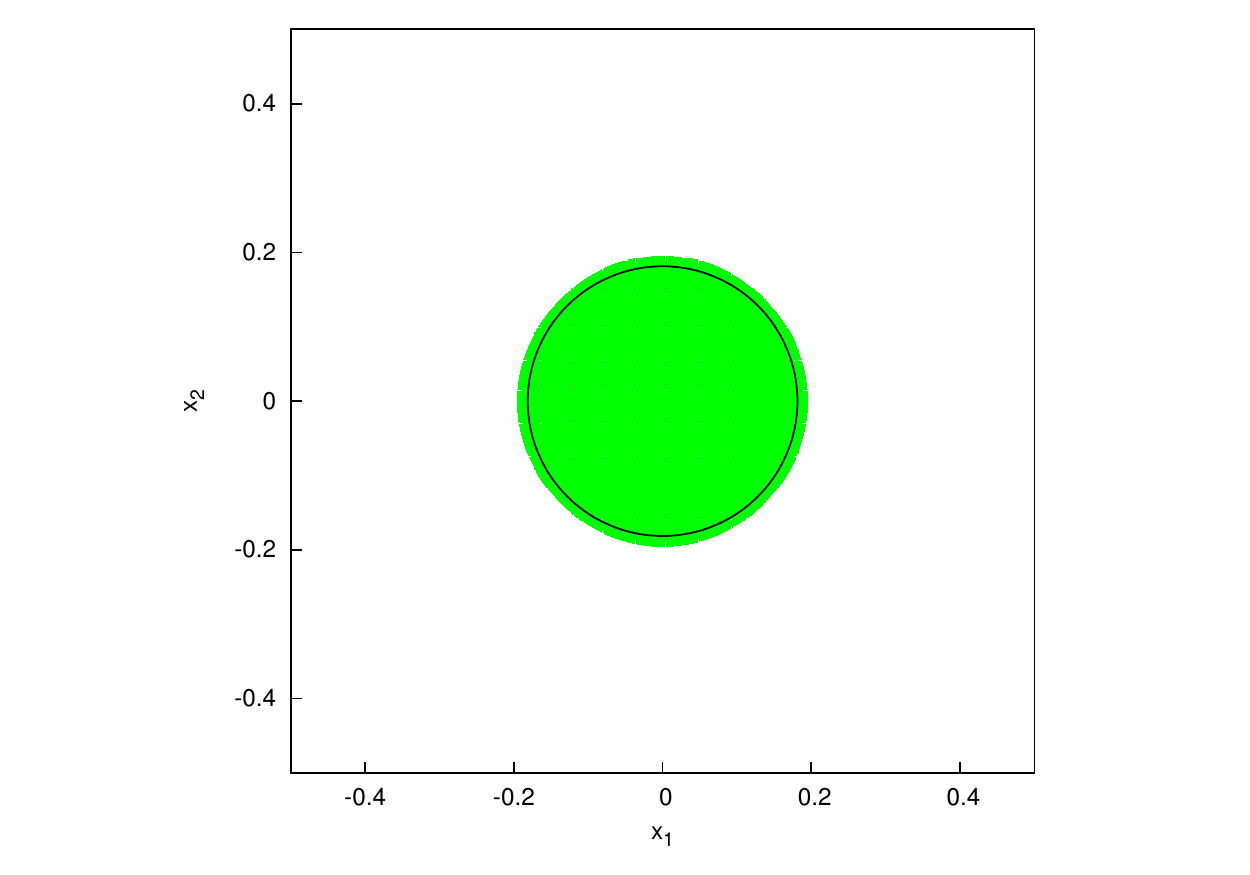}
\caption{Discrete coincidence set for Example 1 for level 7 (left) and level 8
(right).}
\label{fig:CoincidenceSetExample1Q2}
\end{figure}
\par
 One of the advantages of the GFEM is that the local approximation space can be easily adjusted.
 In Table~\ref{Ex1b} we report the numerical results for the same problem but with
  $\mathbb{Q}_3$ as the local approximation space.  An $O(h^{1.5})$ energy error is
  observed, which is due to the fact that the exact solution $u$ is piecewise smooth.
\par
 \begin{table}[h]
 \begin{center}
 \begin{tabular}{ | c | c r | c r | }
 \hline
 &&&&\\[-12pt]
 $j$ & $\|e_j\|_j/\|u_{8}\|_{8} $ & $\beta_h$ & $\|e_j\|_{\infty} $ & $\beta_{\infty}$ \\
 \hline&&&&\\[-12pt]
  1 &   1.4199   $\times 10^{-2}$ &    &   1.0561   $\times 10^{-4}$ &   \\ \hline&&&&\\[-12pt]
  2 &   6.1489   $\times 10^{-2}$ &  -1.8589   &   7.6281   $\times 10^{-4}$ &   -2.5078  \\ \hline&&&&\\[-12pt]
  3 &   1.8374   $\times 10^{-2}$ &   1.6377 &   9.4507  $\times 10^{-5}$ &   2.8313 \\ \hline&&&&\\[-12pt]
  4 &   5.8004  $\times 10^{-3}$ &   1.6134 &   1.4396  $\times 10^{-5}$ &   2.6330 \\ \hline&&&&\\[-12pt]
  5 &   2.3728   $\times 10^{-3}$ &   1.2702 &   4.7114  $\times 10^{-6}$ &   1.5872 \\ \hline&&&&\\[-12pt]
  6 &   8.3768  $\times 10^{-4}$ &   1.4908 &   4.1685  $\times 10^{-7}$ &   3.4723 \\ \hline&&&&\\[-12pt]
  7 &   2.7675  $\times 10^{-4}$ &   1.5918 &   3.7129  $\times 10^{-6}$ &   -3.1431 \\ \hline
   \end{tabular}
 \end{center}
\par\medskip
\caption{Energy norm and $\ell_{\infty}$ norm errors for Example 1 with $\mathbb{Q}_3$ as local approximation space.}
\label{Ex1b}
 \end{table}
\begin{remark}\label{rem:ellInfty}
 Note that the $\ell_\infty$ norm errors fluctuate.  This is likely due to
 the fact that the primal dual active set strategy is based on
 stopping conditions that are unrelated to the $\ell_\infty$ norm.
\end{remark}
\par\smallskip\goodbreak
 {\bf Example 2.}
 In this example we take $\O=(-0.5,0.5)^2, f=g=0$ and $\psi(x)=1-5|x|^2+|x|^4.$
 We solve the discrete obstacle problems using the same PU functions as in Example 1.
\par
 Since the exact solution is not known, we take $\tilde{e}_j=\Pi_j u_{j-1}-u_j$ and
 compute the rates of convergence $\tilde{\beta}_h$ and $\tilde{\beta}_{\infty}$ by
\begin{equation*}
\tilde{\beta}_h=\ln(\|\tilde{e}_{j/2}\|_{j/2}/\|\tilde{e}_j\|_j)/\ln(h_{j/2}/h_j) \quad\text{ and } \quad
 \tilde{\beta}_{\infty}=\ln(\|\tilde{e}_{j/2}\|_{\infty}/\|\tilde{e}_j\|_{\infty})/\ln(h_{j/2}/h_j).
\end{equation*}
 The results are presented in Table~\ref{Ex2}.
\begin{table}[h]
 \begin{center}
 \begin{tabular}{ | c | c r | c r | }
 \hline
 &&&&\\[-12pt]
 $j$ & $\|\tilde{e}_j\|_j/\|u_{8}\|_{8} $ & $\tilde{\beta}_h$ & $\|\tilde{e}_j\|_{\infty} $ &
 $\tilde{\beta}_{\infty}$ \\ \hline
 &&&&\\[-12pt]
  1 &   2.9288   $\times10^{-0}$ &    &   9.0040  x $\times10^{-1}$ &   \\ \hline &&&&\\[-12pt]
  2 &   5.9820   $\times10^{-0}$ &   -0.9058  &   5.3416  x $\times10^{-1}$ & 0.6622    \\ \hline&&&&\\[-12pt]
  3 &   1.2402   $\times10^{-0}$ &   2.1333 &   5.2357   $\times10^{-1}$ &   0.0271 \\ \hline&&&&\\[-12pt]
  4 &   6.5242   $\times10^{-1}$ &   0.8988 &   2.5914   $\times10^{-2}$ &   4.2061 \\ \hline&&&&\\[-12pt]
  5 &   1.8496   $\times10^{-1}$ &   1.7913 &   1.7757   $\times10^{-3}$ &   3.8091 \\ \hline&&&&\\[-12pt]
  6 &   8.9273   $\times10^{-2}$ &   1.0430 &   4.4337   $\times10^{-4}$ &   1.9867 \\ \hline&&&&\\[-12pt]
  7 &   4.4296   $\times10^{-2}$ &   1.0072 &   1.1284   $\times10^{-4}$ &   1.9667 \\ \hline&&&&\\[-12pt]
  8 &   2.2154   $\times10^{-2}$ &   0.9977 &   3.7776   $\times10^{-5}$ &   1.5758 \\ \hline
   \end{tabular}
 \end{center}
\par\medskip
\caption{Energy norm and $\ell_{\infty}$ norm errors for Example 2.}
\label{Ex2}
 \end{table}
\par
 Since $\Delta^2\psi-f>0$ in this example, the non-coincidence set is known to be connected
 \cite{CF:1979:BiharmonicObstacle}.  This is confirmed by
 the discrete coincidence sets $I_7$ and $I_8$ displayed in Figure~\ref{fig:CoincidenceSetExample2}.
 Note that the discrete coincidence sets have the correct symmetries: rotations by right angles and
 reflections across coordinates axes.
\begin{figure}[h]
\centering\includegraphics[scale=0.5]{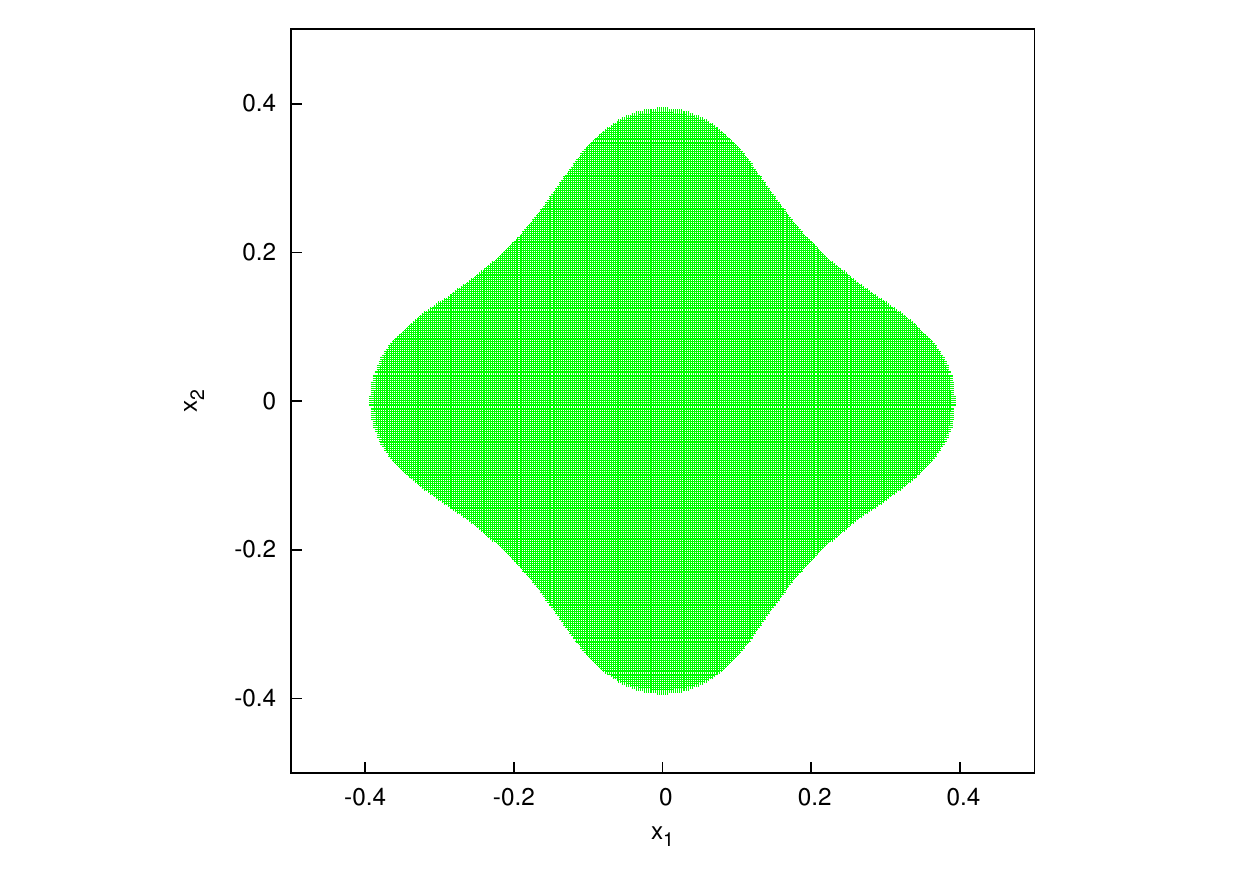}
\centering\includegraphics[scale=0.5]{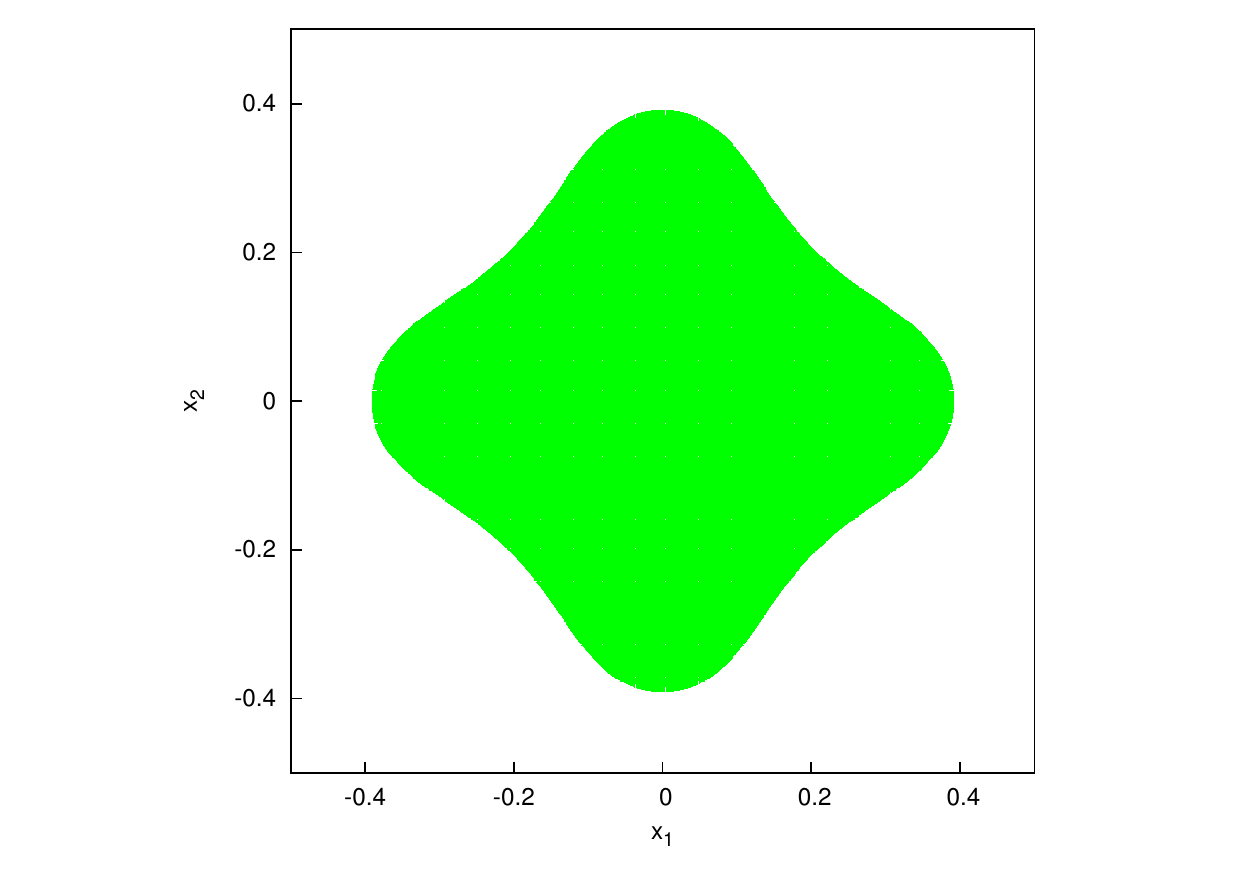}
\caption{Discrete coincidence set for Example 2 for level 7 (left) and level 8 (right).}
\label{fig:CoincidenceSetExample2}
\end{figure}
\par\smallskip\goodbreak
{\bf Example 3.}
 In this example we take $\O=(-0.5,0.5)^2$, $f=g=0$ and $\psi(x)=1-5|x|^2-|x|^4$.
 We solve the discrete obstacle problems using the same
 PU functions as in Example 1.  Numerical results are tabulated in Table~\ref{Ex3}.
\begin{table}[hh]
 \begin{center}
 \begin{tabular}{ | c | c r | c r | }
 \hline &&&&\\[-12pt]
 $j$ & $\|\tilde{e}_j\|_j/\|u_{8}\|_{8} $ & $\tilde{\beta}_h$
 & $\|\tilde{e}_j\|_{\infty} $ & $\tilde{\beta}_{\infty}$ \\ \hline &&&&\\[-12pt]
  1 &   3.0796   $\times10^{-0}$ &    &   8.9960   $\times10^{-1}$ &   \\ \hline &&&&\\[-12pt]
  2 &   6.2833   $\times10^{-0}$ &   -0.9044  &   4.8507   $\times10^{-1}$ & 0.7834    \\ \hline &&&&\\[-12pt]
  3 &   1.0279   $\times10^{-0}$ &   2.4544 &   4.3181   $\times10^{-1}$ &   0.1576 \\ \hline &&&&\\[-12pt]
  4 &   2.9125   $\times10^{-1}$ &   1.7646 &   1.9025   $\times10^{-2}$ &   4.3689 \\ \hline&&&&\\[-12pt]
  5 &   1.4890   $\times10^{-1}$ &   0.9533 &   1.6296   $\times10^{-3}$ &   3.4920 \\ \hline&&&&\\[-12pt]
  6 &   7.1583   $\times10^{-2}$ &   1.0487 &   4.8682   $\times10^{-4}$ &   1.7300 \\ \hline&&&&\\[-12pt]
  7 &   3.6108   $\times10^{-2}$ &   0.9836 &   1.3055   $\times10^{-4}$ &   1.8916 \\ \hline&&&&\\[-12pt]
  8 &   1.8072   $\times10^{-2}$ &   0.9966 &   3.1174   $\times10^{-5}$ &   2.0623 \\ \hline
   \end{tabular}
 \end{center}
\par\medskip
\caption{Energy norm and $\ell_{\infty}$ norm errors for Example 3.}\label{Ex3}
 \end{table}
\par
 The set-up for Example~3 is very similar to that of Example~2, except that now
 $\Delta^2\psi-f<0$ and hence the interior of the coincidence set must be empty, otherwise the
 complementarity form of the variational inequality would be violated.  This is
 confirmed by the discrete coincidence sets in Figure~\ref{fig:CoincidenceSetExample3}, which
 also possess  the correct symmetries.
\begin{figure}[h]
\centering\includegraphics[scale=0.5]{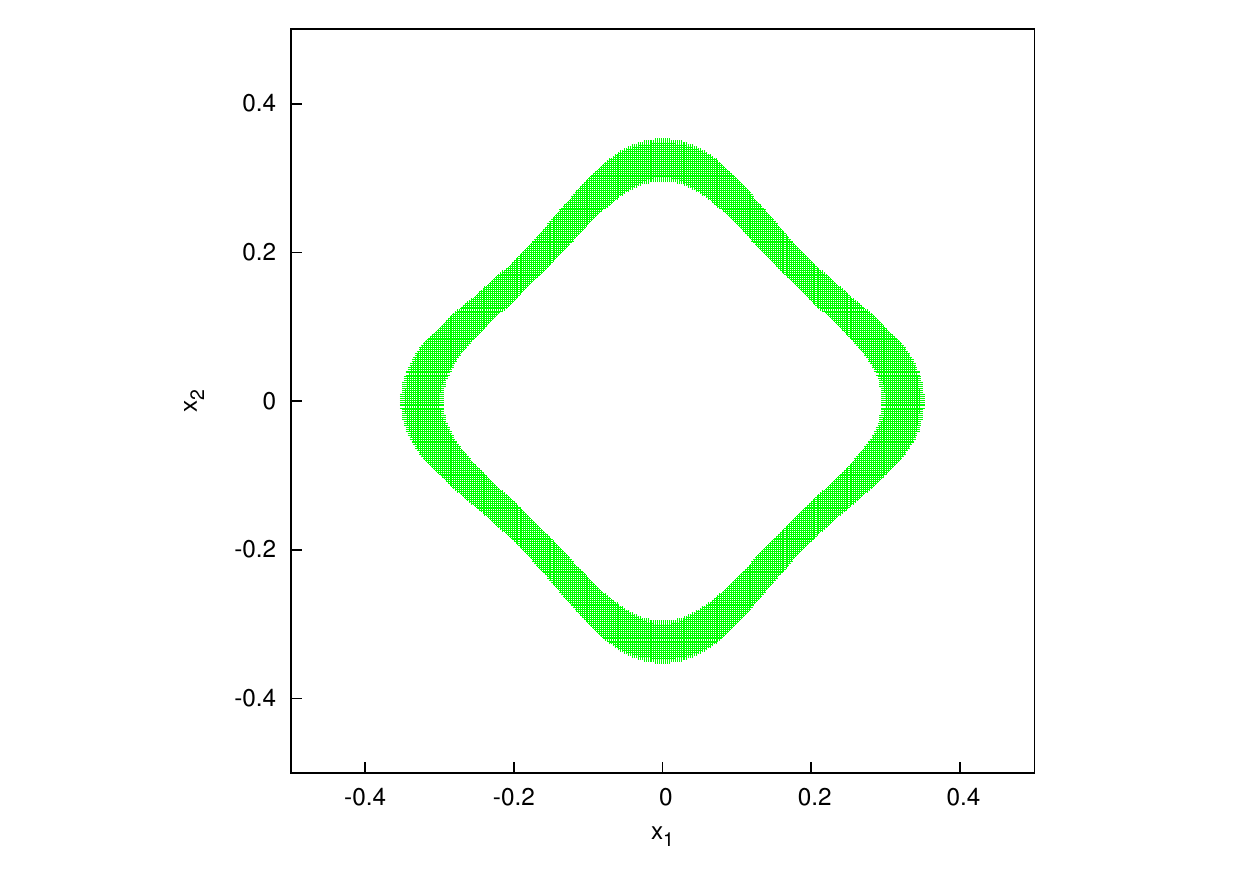}
\centering\includegraphics[scale=0.5]{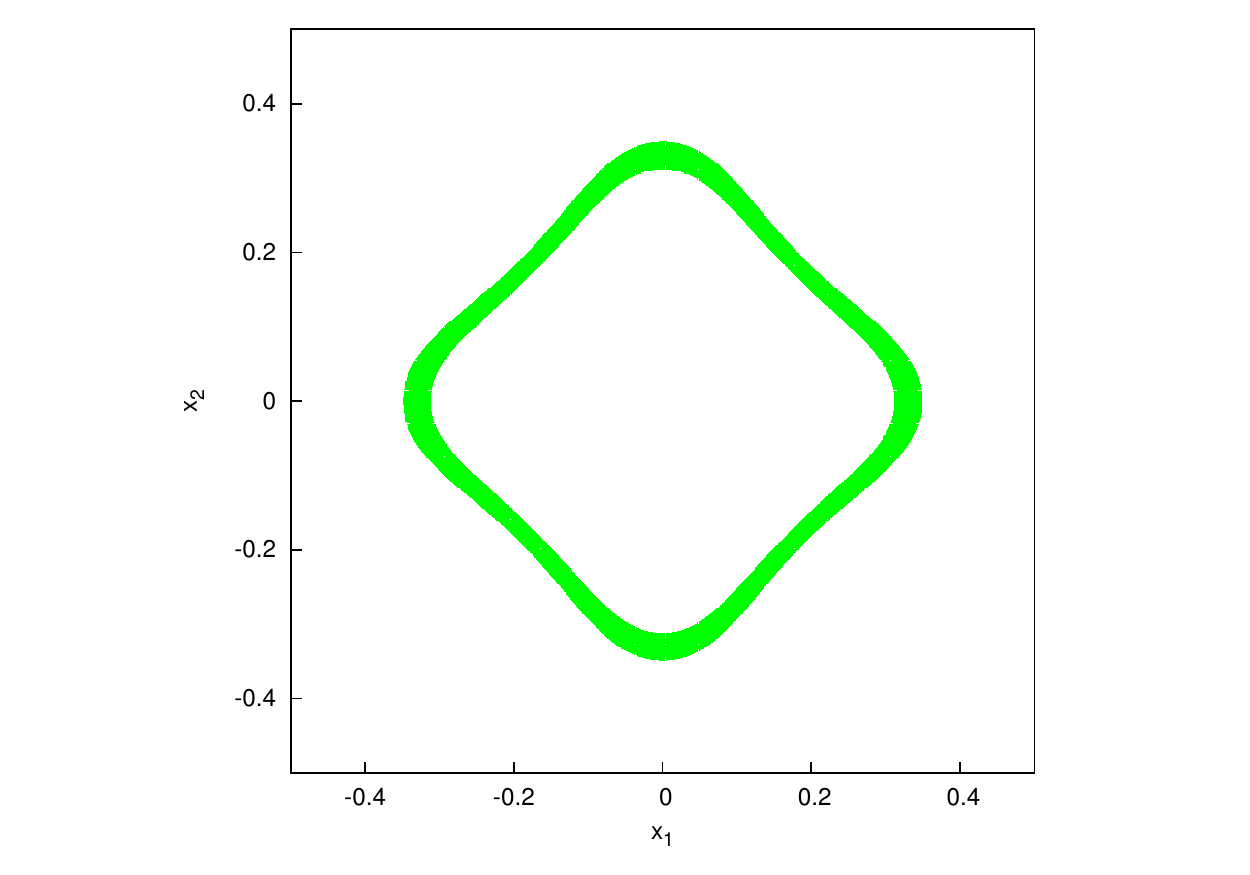}
\caption{Discrete coincidence set for Example 3 for level 7 (left) and level 8 (right).}
\label{fig:CoincidenceSetExample3}
\end{figure}
\par\smallskip
{\bf Example 4.}
 In this example we take $\O$ to be the $L$-shaped domain
 $(-0.5,0.5)^2\backslash[0,0.5]^2$, $f=g=0$ and
 $\psi(x)=1-\left[\frac{(x_1+0.25)^2}{0.2^2}+\frac{x_2^2}{0.35^2}\right].$
 We solve the discrete obstacle problems using a similar partition
 as described in Section~\ref{subsec:ApproximationSpace}.  For this example, $j$
 is chosen so that it is the level where there are $2^j+1$ subdivisions in each direction,
 making $h_j=(2^j+1-1/3)^{-1}.$  This allows us to insert an $L$-shaped element in
 the vicinity of the reentrant corner as described in Remark~\ref{rem:LShaped}.
\par
 From the numerical results in Table~\ref{Ex4} we observe that $\tilde{\beta}_h$ is approaching
 $\mathcal{O}(h^{\alpha})$ where $\alpha=0.544$ is the index of elliptic
 regularity for the $L$-shaped domain, as predicted by Theorem \ref{thm:Error}.
 \begin{table}[h]
 \begin{center}
 \begin{tabular}{ | c | c r | c r | }
 \hline &&&&\\[-12pt]
 $j$ & $\|\tilde{e}_j\|_j/\|u_{8}\|_{8} $ & $\tilde{\beta}_h$ &
 $\|\tilde{e}_j\|_{\infty} $ & $\tilde{\beta}_{\infty}$ \\ \hline &&&&\\[-12pt]
  1 &   4.4737   $\times10^{-0}$ &    &   1.0000   $\times10^{-0}$ &   \\ \hline &&&&\\[-12pt]
  2 &   6.9545   $\times10^{-0}$ &   -0.7884  &   5.9996   $\times10^{-1}$ & 0.9129   \\ \hline &&&&\\[-12pt]
  3 &   2.9079   $\times10^{-0}$ &   1.4086 &   3.2598   $\times10^{-1}$ &   0.9854 \\ \hline &&&&\\[-12pt]
  4 &   1.8562   $\times10^{-0}$ &   0.6864 &   1.3853   $\times10^{-1}$ &   1.3086 \\ \hline &&&&\\[-12pt]
  5 &   6.9086   $\times10^{-1}$ &   1.4687 &   4.0400   $\times10^{-2}$ &   1.8312 \\ \hline &&&&\\[-12pt]
  6 &   2.8930   $\times10^{-1}$ &   1.2747 &   2.9381   $\times10^{-2}$ &   0.4664 \\ \hline &&&&\\[-12pt]
  7 &   1.6919   $\times10^{-1}$ &   0.7797 &   1.4457   $\times10^{-2}$ &   1.0308 \\ \hline &&&&\\[-12pt]
  8 &   1.0582   $\times10^{-1}$ &   0.6796 &   6.9259   $\times10^{-3}$ &   1.0657 \\ \hline
   \end{tabular}
 \end{center}
\par\medskip
\caption{Energy norm and $\ell_{\infty}$ norm errors for Example 4.}\label{Ex4}
 \end{table}
\par
 Since $\Delta^2\psi-f=0$ for this example, the non-coincidence set is connected \cite{CF:1979:BiharmonicObstacle},
 which is confirmed by Figure~\ref{fig:CoincidenceSetExample4}.
\begin{figure}[h]
\centering\includegraphics[scale=0.5]{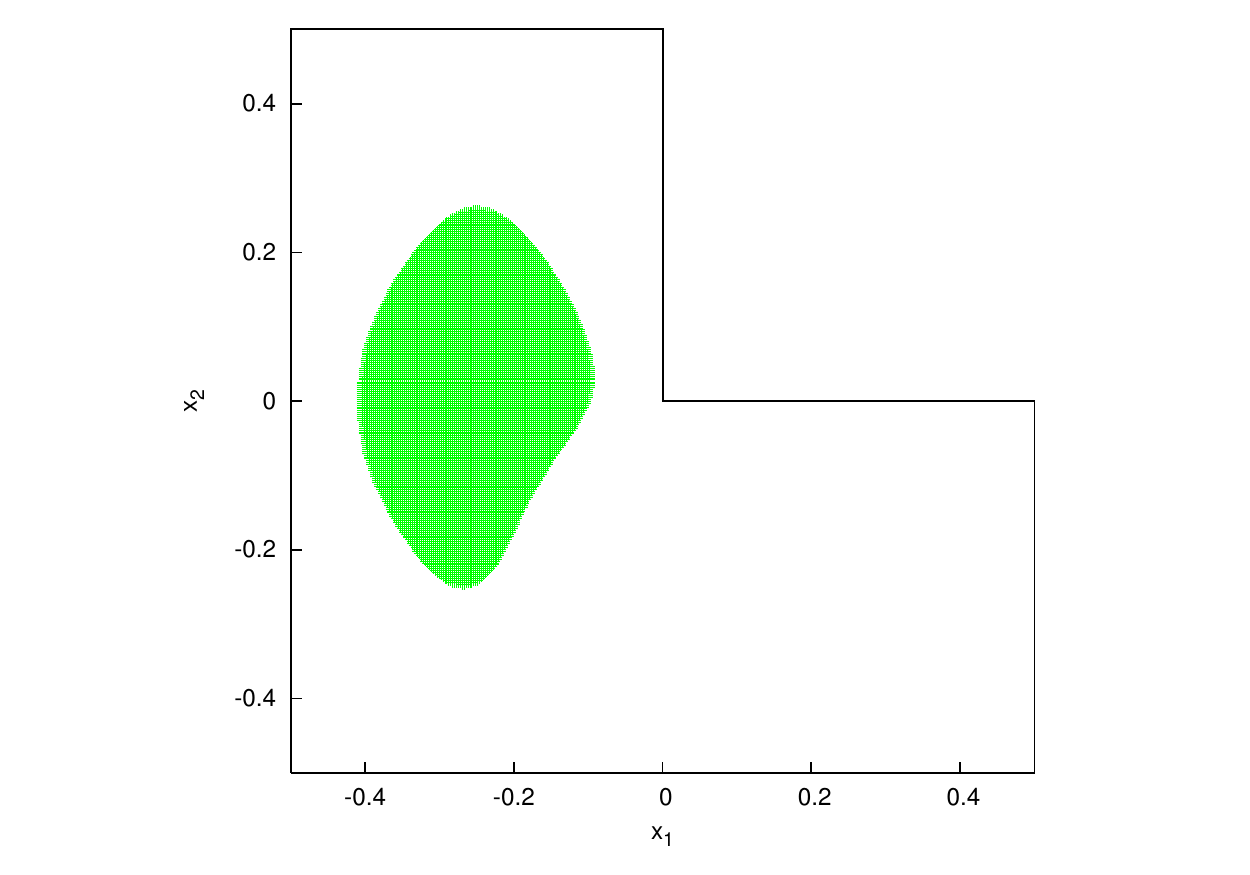}
\centering\includegraphics[scale=0.5]{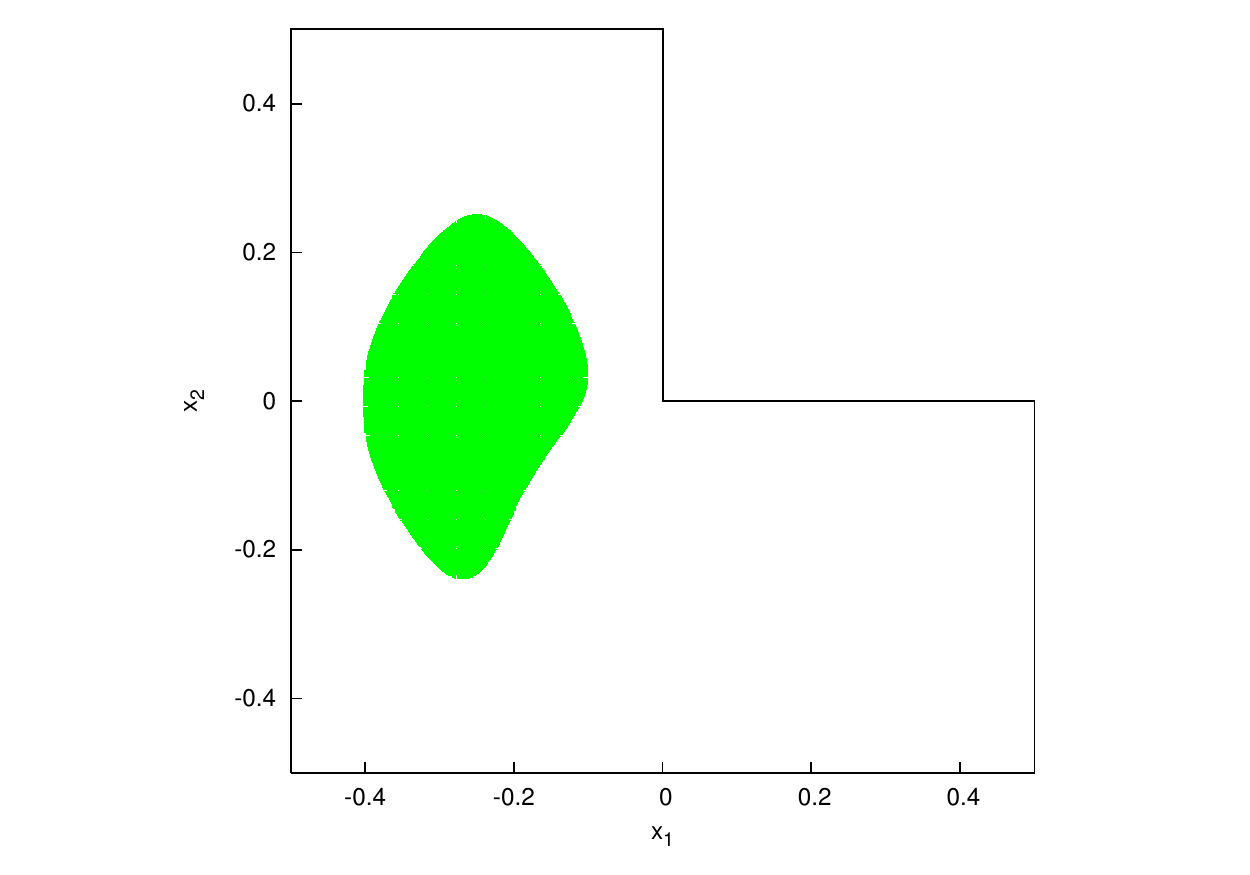}
\caption{Discrete coincidence set for Example 4 for level 7 (left) and level 8 (right).}
\label{fig:CoincidenceSetExample4}
\end{figure}
\par\smallskip
{\bf Example 5.}
 In this example we take $\O$ to be the pentagon $\{x\in(-0.5,0.5)^2:x_1+x_2<0.5\}$.
 We take $f=g=0$ and $\psi(x)=1-9|x|^2.$  We solve the discrete obstacle problems using
 a similar partition as described in Section~\ref{subsec:ApproximationSpace}.  For this example,
 $j$ is chosen so that it is the level where there are $2^j+1$ subdivisions
 in each direction, making $h_j=(2^j+1-1/3)^{-1}.$  This allows us to insert
 different types of elements near the obtuse vertices of $\O$, see Figure~\ref{fig:dofplacementPent}
 and Figure~\ref{fig:dofplacementPent2}.   The numerical results are reported in
 Table~\ref{Ex5}.
\begin{figure}[h]
\begin{center}
 \includegraphics[scale=.5]{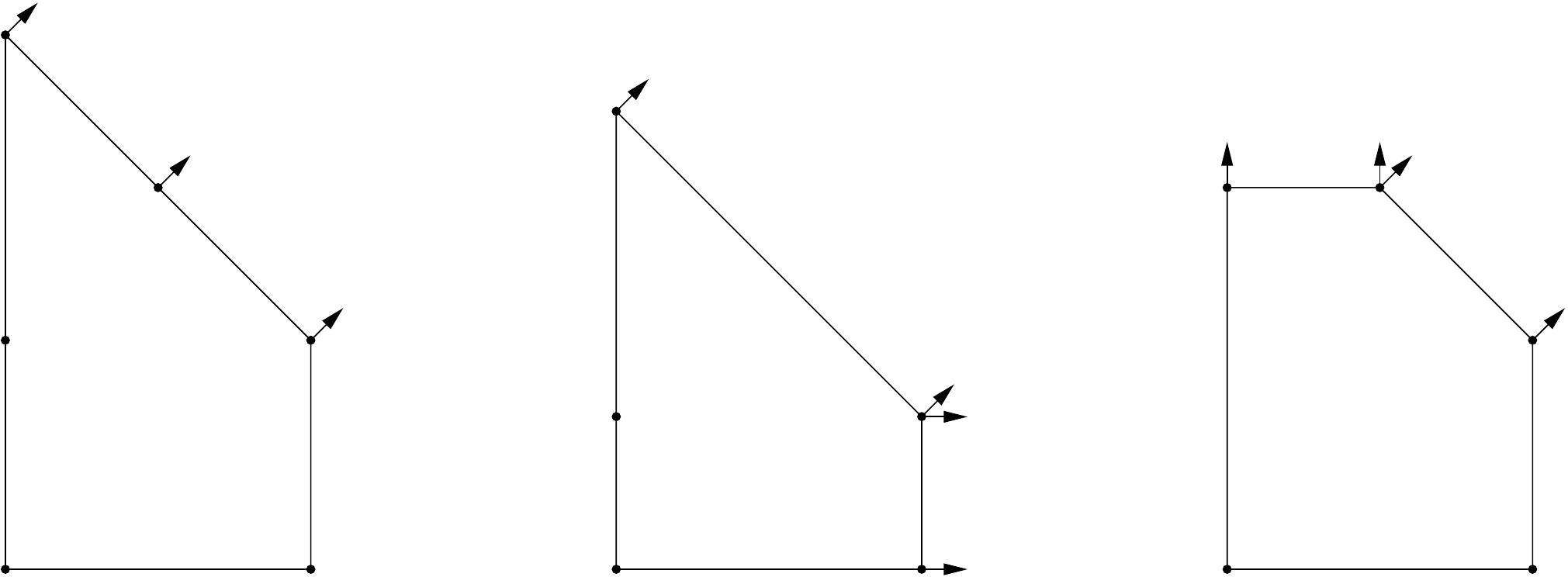}
 \caption{Reference elements for the pentagonal domain.}
\label{fig:dofplacementPent}
\end{center}
\end{figure}
\begin{figure}[h]
\begin{center}
 \includegraphics[scale=.75]{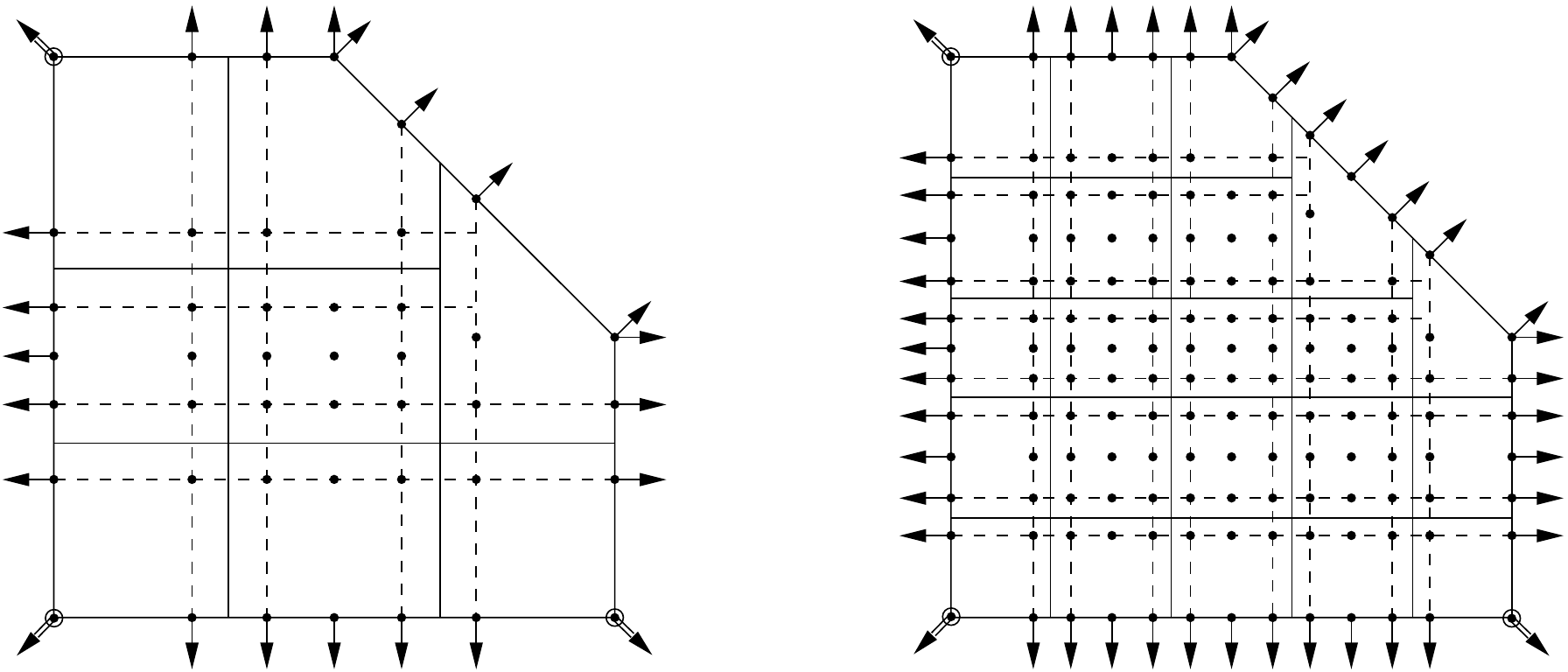}
 \caption{A partition of the pentagonal domain $\O$ for levels $j=1$ (left) and $j=2$ (right).  The
  solid lines separate the different patches $Q_j, j=1, \dots, 9$ (left)
  and $Q_j, j=1,\ldots,25$ (right).  The dashed lines represent
 the extension of $Q_j$ by $\delta(h/2)$ on each side.  This figure also shows the locations of the
 degrees of freedom.}
\label{fig:dofplacementPent2}
\end{center}
\end{figure}
 \begin{table}[h]
 \begin{center}
 \begin{tabular}{ | c | c r | c r | }
 \hline &&&&\\[-12pt]
 $j$ & $\|\tilde{e}_j\|_j/\|u_{8}\|_{8} $ & $\tilde{\beta}_h$ & $\|\tilde{e}_j\|_{\infty} $
 & $\tilde{\beta}_{\infty}$ \\ \hline &&&&\\[-12pt]
  1 &   5.1600   $\times 10^{-0}$ &    &   1.0628   $\times 10^{-0}$ &   \\ \hline &&&&\\[-12pt]
  2 &   1.0383   $\times 10^{+1}$ &   -1.2495  &   1.1233   $\times 10^{-0}$ & 0.0989   \\ \hline &&&&\\[-12pt]
  3 &   4.8834   $\times 10^{-0}$ &   1.2185 &   4.6694   $\times 10^{-1}$ &   1.4181 \\ \hline &&&&\\[-12pt]
  4 &   3.6378   $\times 10^{-0}$ &   0.4503 &   2.5049   $\times 10^{-1}$ &   0.9524 \\ \hline &&&&\\[-12pt]
  5 &   1.5514   $\times 10^{-0}$ &   1.2664 &   2.7118   $\times 10^{-2}$ &   3.3037 \\ \hline &&&&\\[-12pt]
  6 &   6.5449   $\times 10^{-1}$ &   1.2639 &   4.5364   $\times 10^{-3}$ &   2.6183 \\ \hline &&&&\\[-12pt]
  7 &   2.8868   $\times 10^{-1}$ &   1.1898 &   9.0815   $\times 10^{-4}$ &   2.3380 \\ \hline &&&&\\[-12pt]
  8 &   1.3864   $\times 10^{-1}$ &   1.0621 &   1.7631   $\times 10^{-4}$ &   2.3737 \\ \hline
   \end{tabular}
 \end{center}
\par\medskip
\caption{Energy norm and $\ell_{\infty}$ norm errors for Example 5.}\label{Ex5}
 \end{table}

\par
 Since $\Delta^2\psi -f=0$ in this example, the non-coincidence set is connected \cite{CF:1979:BiharmonicObstacle},
 which is confirmed by Figure~\ref{fig:CoincidenceSetEample5}, where the discrete coincidence sets also
 display the correct reflection symmetry.
\begin{figure}[h]
\centering\includegraphics[scale=0.5]{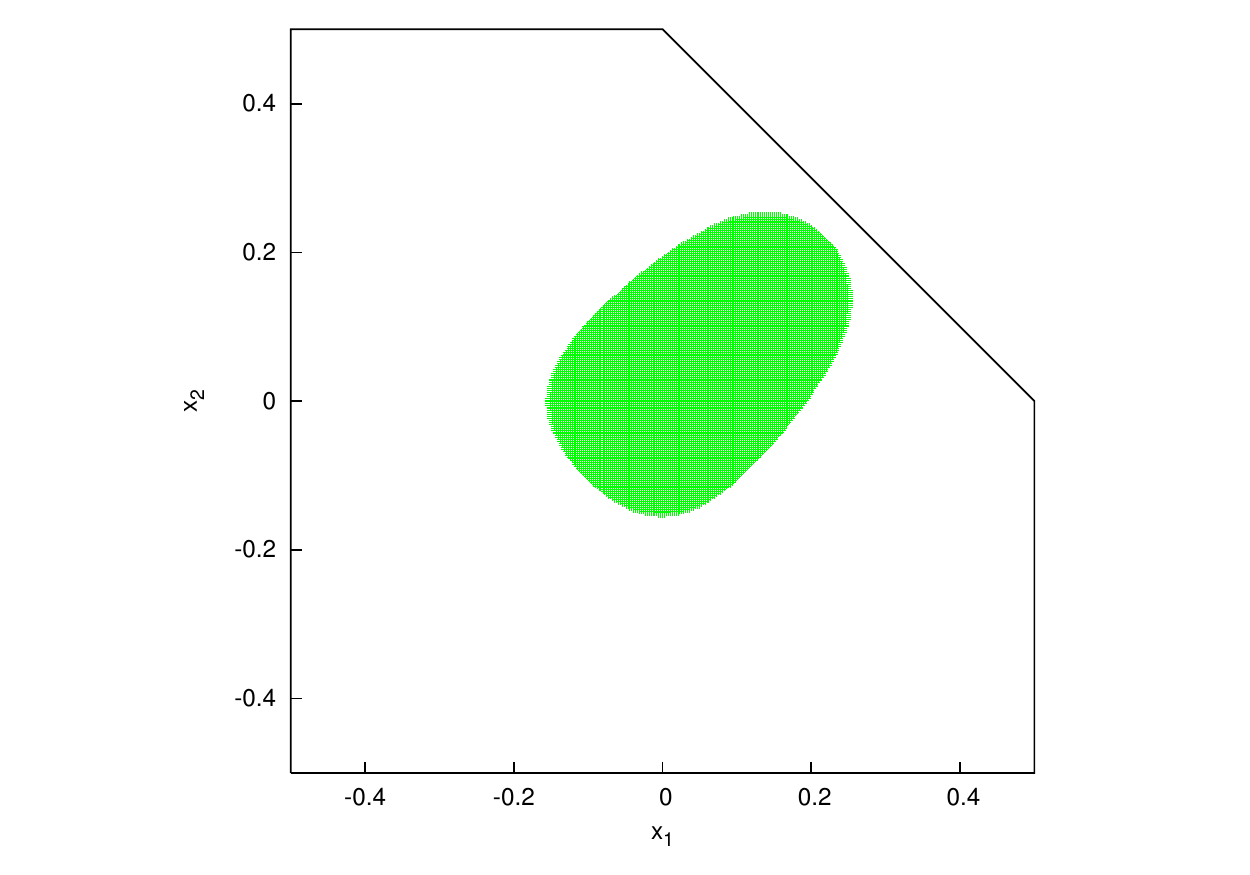}
\centering\includegraphics[scale=0.5]{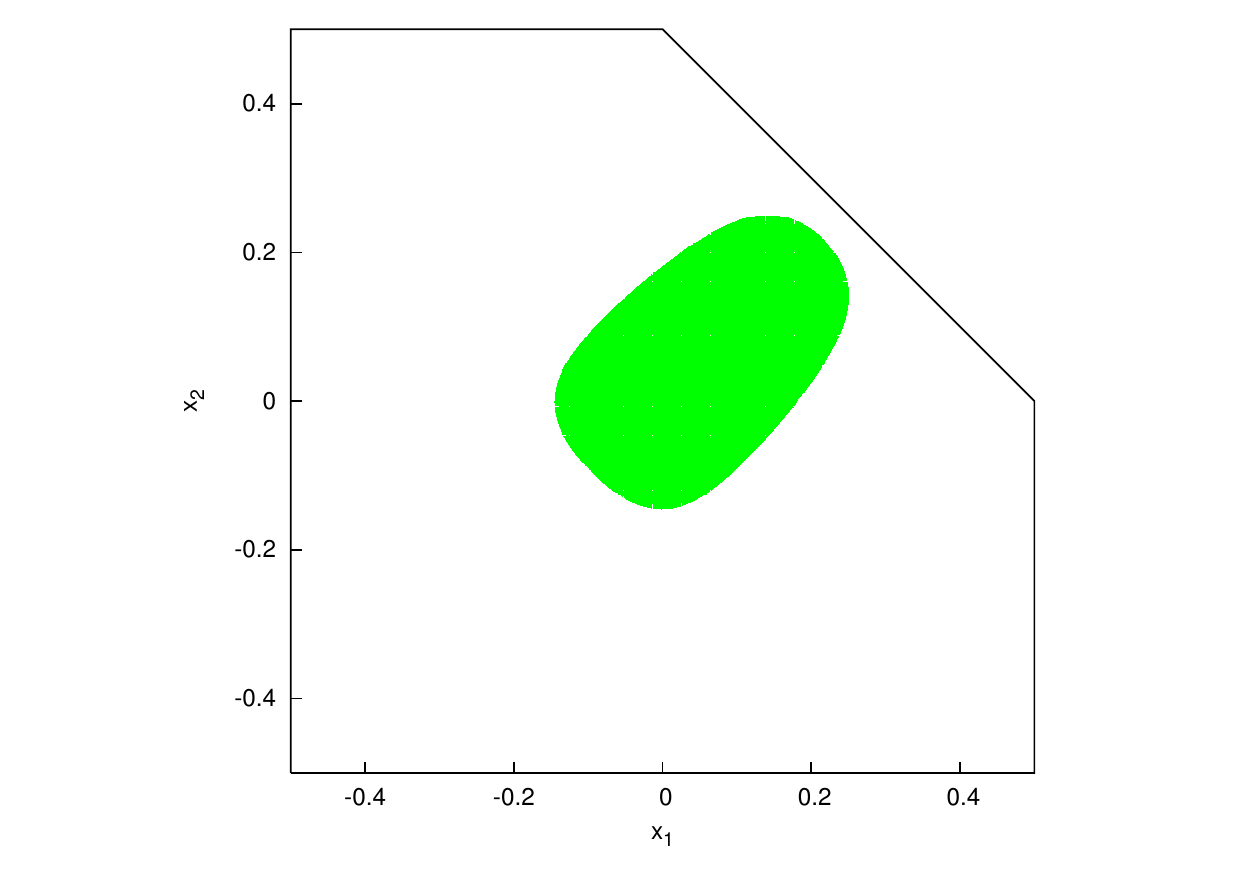}
\caption{Discrete coincidence set for Example 5 for level 7 (left) and level 8 (right).}
\label{fig:CoincidenceSetEample5}
\end{figure}

\end{document}